\documentclass[a4paper]{llncs}

\usepackage{tikz-cd} 

\usepackage{relsize}
\usepackage{cite}
\usepackage{color}
\usepackage{hyperref}
\hypersetup{
	colorlinks   = true, 
	urlcolor     = green, 
	linkcolor    = black,
	citecolor   = red 
}
\usepackage{amsmath,amssymb}

\usepackage{mathtools}
\usepackage[margin=2.6cm]{geometry}

\makeatletter
\usepackage{comment}
\let\wfs@comment@comment\comment
\let\comment\@undefined

\usepackage{changes}
\let\wfs@changes@comment\comment
\let\comment\@undefined

\newcommand\comment{%
	\ifthenelse{\equal{\@currenvir}{comment}}
	{\wfs@comment@comment}
	{\wfs@changes@comment}%
}

\definechangesauthor[name=Matteo, color=red]{MAT}
\definechangesauthor[name=Martino, color=blue]{MAR}
\definechangesauthor[name=Eimear, color=violet]{EIM}
\usepackage{stmaryrd}
\newcommand{\numberset}{\mathbb}

\newcommand{\F}{\numberset{F}}

\newcommand{\C}{\mathcal{C}}
\newcommand{\mS}{\mathcal{S}}

\newcommand{\mC}{\mathcal{C}}

\newcommand{\mN}{\mathcal{N}}

\newcommand{\mU}{\mathcal{U}}

\newcommand{\mH}{\mathcal{H}}

\newcommand{\Fq}{\F_q}

\newcommand{\bs}{\bf{s}}

\newcommand\qbin[3]{\left[\begin{matrix} #1 \\ #2 \end{matrix} \right]_{#3}}

\newcommand{\bu}{\mathbf{u}}
\newcommand{\bv}{\mathbf{v}}

\newcommand{\bn}{\mathbf{n}}

\newcommand{\rk}{\textnormal{rk}}

\newcommand{\srk}{\textnormal{srk}}

\DeclareMathOperator{\supp}{supp}

\DeclareMathOperator{\PG}{PG}

\newcommand{\Fqm}{\mathbb{F}_{q^m}}

\providecommand{\keywords}[1]
{
	
	\noindent\textbf{{\small \textbf{Keywords.}}} #1
}

\usepackage{array}
\newcolumntype{x}[1]{>{\centering\arraybackslash\hspace{0pt}}p{#1}}
\newcolumntype{y}[1]{>{\centering\arraybackslash\hspace{0pt}}m{#1}}

\usepackage{tikz,pgfplots}
\pgfplotsset{compat=newest}
\usepackage{setspace}

\title{The geometry of covering codes in the sum-rank metric}

\author{Matteo Bonini \inst{1} \and Martino Borello \inst{2,3} \and Eimear Byrne\inst{4}}
\institute{Aalborg University, Department of Mathematical Sciences, Aalborg, Denmark\
	\email{mabo@math.aau.dk}
	\and
	Universit\'e Paris 8, Laboratoire de G\'eom\'etrie, Analyse et Applications, LAGA,
	Universit\'e Sorbonne Paris Nord, CNRS, UMR 7539, France\
	\email{martino.borello@univ-paris8.fr}
	\and 
	INRIA,  France
	\and
	School of Mathematics and Statistics, University College Dublin, Ireland\
	\email{ebyrne@ucd.ie}}

\date{}

\begin{document}
	
	\maketitle
	
	\begin{abstract}
		We introduce the concept of a sum-rank saturating system and outline its correspondence to covering properties of a sum-rank metric code. We consider the problem of determining the shortest length of a sum-rank-$\rho$-saturating system of a fixed dimension, which is equivalent to the covering problem in the sum-rank metric. We
		obtain upper and lower bounds on this quantity.
		We also give constructions of saturating systems arising from geometrical structures.
	\end{abstract}
	
	\keywords{{\small Saturating sets, linear sets, sum-rank metric codes, covering radius}}
	
	\smallskip
	\noindent {\bf MSC2020.} 05B40, 11T71, 51E20, 52C17, 94B75
	\section*{Introduction}
	Researchers have extensively explored the connections between linear codes and sets of points in finite geometries, as evidenced by previous works such as \cite{calderbank1986geometry,dodunekov1998codes,alfarano2019geometric,davydov1995constructions,davydov2000saturating,davydov2011linear}. 
	Generator or parity check matrices of a linear code can be identified with multisets 
	of projective points, while the supports of codewords can be identified with complements of hyperplanes in a fixed projective set. The interconnection between these two domains facilitates the application of methods from one field to the other. Notably, this approach has been used to construct codes with a bounded {\em covering radius}, associated with {\em saturating sets} in projective space.
	Recent investigations in the geometry of rank-metric  codes\cite{alfarano2021linear,randrianarisoa} reveal their correspondence to $q$-systems and linear sets. A similar correspondence holds for sum-rank metric codes \cite{santonastaso2022subspace, neri2023geometry}.
	
	The sum-rank metric may be viewed as a hybrid of the rank and Hamming metric, extending both notions. Sum-rank metric codes have found applications in network coding, space-time coding, and distributed storage systems \cite{Nbrega2010MultishotCF,M-PKsr,M-PKlr,M-PSK}. They have been studied in terms of decoding and code-optimality (see, e.g. \cite{abiad+,chen,sven+}). One of the key reasons this metric has gained attention in recent years is that sum-rank metric codes beat traditional codes in terms of the field size needed to construct codes that meet the Singleton-like bound, due to the existence of linearized Reed-Solomon codes \cite{martinez2018skew}.  In this paper, we focus on sum-rank metric codes that are $\F_{q^m}$-linear subspaces. While there exists a more general description of sum-rank metric codes simply as linear spaces of matrices over $\Fq$, this restriction has immediate connections to geometric approaches \cite{neri2023geometry,borello2023geometric}. 
	
	This paper focuses on the covering radius.  The covering radius of a code is the smallest positive integer $\rho$ such that the union of the spheres of radius $\rho$ about each codeword is equal to the entire ambient space.  The covering radius serves as an indicator of combinatorial properties, such as {\em maximality}, and is an invariant of code equivalence. It also provides insight into error-correcting capabilities by determining the maximal weight of a correctable error.
	This essential coding theoretical parameter has been extensively studied for codes in the context of the Hamming metric \cite{brualdiplesswil,coveringcodes,davydov1995constructions,davydov2011linear,davydov2000saturating,denaux2021constructing,davydov2003saturating}. However, only a few papers in the literature on rank-metric codes and sum-rank metric codes address this parameter \cite{byrne2017covering,gadouleauphd, ott, bonini2022saturating}.
	Recently, in \cite{bonini2022saturating}, a purely geometrical approach based on saturating systems was proposed to study the covering radius in the rank metric. This approach led to new bounds and interesting examples of covering codes in the rank metric, see \cite{bonini2022saturating, bartoli2023saturating}. 
	
	In this paper, we extend these ideas to the sum-rank metric by introducing the concept of a sum-rank saturating system, aligning it with a sum-rank metric covering code. We also provide new bounds for covering codes in the sum-rank metric, as well as examples arising from partitions and cutting systems.
	
	After recalling some main definitions and results in Section \ref{sec:Prel}, we introduce the main object of the paper in Section \ref{sec:SRsatsyst}: we introduce the notion of a sum-rank saturating system, give equivalent characterizations of such systems, and outline the connection to the rank covering radius of a sum-rank metric code. In Section \ref{sec:bounds} we give upper and lower bounds on the minimum $\F_q$-dimension of a sum-rank saturating system. Finally, in \ref{sec:constructions} we provide explicit constructions of sum-rank saturating systems from partitions of projective spaces and cutting designs.
	
	\section{Preliminaries}   \label{sec:Prel}
	
	\subsection{Vector sum-rank metric codes} 
	Throughout this paper, we will let $t$ denote a fixed positive integer and we will by
	$\mathbf{n}=(n_1,\ldots,n_t) \in \mathbb{N}^t$ an ordered tuple with $n_1 \geq n_2 \geq \ldots \geq n_t$ and $N = n_1+\ldots+n_t$. We use the notation $\F_{q^m}^\mathbf{n}=\bigoplus_{i=1}^t\F_{q^m}^{n_i}$ for the direct sum of vector spaces $\F_{q^m}^{n_i}$.
	We write $\mN$ to denote the set $\{(a_1,\ldots,a_t): 0 \leq a_i \leq n_i\}$.

	\begin{definition}
		Given a pair of nonnegative integers $n$ and $m$, the $q$-\textbf{binomial} or \textbf{Gaussian coefficient} counts the number of $m$-dimensional subspaces of an $n$-dimensional subspace over $\F_q$ and is defined to be:
		$$\qbin{n}{m}{q}:=
		\left\{
		\begin{array}{cl}
			\displaystyle \prod_{i=0}^{m-1}\frac{q^n-q^i}{q^m-q^i}   &  \text{ if } n \geq m > 0,\\
			0 & \text{ if } m > n, \\
			1 & \text{ if } m=0.\\
		\end{array}\right.$$
	\end{definition}
	
	We will adopt the following notation: for $\bu=(u_1,\dots,u_t),\bv=(v_1,\cdots,v_t)$ we define,
	\begin{align*}
		|\bu|&:= \sum_{j=1}^t u_j,&
		\qbin{\bu}{\bv}{q}&:=\prod_{j=1}^t  \qbin{u_j}{v_j}{q},&
		q^{\binom{\bu}{2}} &:=\prod_{j=1}^t q^{\binom{u_j-v_j}{2}}. 
	\end{align*}

	
	We recall the following definitions.
	
	\begin{definition}
		The  rank or rank weight of a vector $v=(v_1,\ldots,v_n) \in \F_{q^m}^n$ is defined to be: 
		$$\textup{w}_\rk(v):=\rk(v):=\dim_{\F_q} (\langle v_1,\ldots, v_n\rangle_{\F_q}).$$
		The sum-rank weight of an element $x=(x_1 ,\ldots, x_t) \in \F_{q^m}^\mathbf{n}$ is 
		defined to be $\displaystyle\textup{w}_{\textup{srk}}(x):=\sum_{i=1}^t \rk(x_i).$
		The sum-rank distance between $x=(x_1 , \ldots , x_t)$ and $ y=(y_1 , \ldots, y_t)$ in 
		$\F_{q^m}^\mathbf{n}$ is defined to be:
		\[
		d(x,y):=w(x-y)=\sum_{i=1}^t \rk(x_i-y_i).
		\]
		We say an $\F_{q^m}$-subspace $\C$ of $\F_{q^m}^{\mathbf{n}}$ is an $[\mathbf{n},k,d]_{q^m/q}$ sum-rank metric code (or an $[\mathbf{n},k]_{q^m/q}$ (sum-rank metric) code) if $k$ is the $\F_{q^m}$-dimension of $\C$ and $d$ is its minimum distance, with respect to the sum-rank metric, i.e.,
		\[
		d=d(\C):=\min\{d(x,y) \colon x, y \in \C, x \neq y  \}.
		\]    
		The dual of $\mC$ is defined to be
		$\C^\perp :=\{ v \in \F_{q^m}^{\mathbf{n}}:\: v \cdot c = 0\: \forall\: c \in \C \}$.
		A matrix $G=(G_1\lvert \ldots \lvert G_t) \in \F_{q^m}^N$ with $G_1,\ldots,G_t \in \F_{q^m}^{k \times {n_i}}$ is called a generator matrix of $\C$ if $G$ has rank $k$ and its $\F_{q^m}$-row-space is $\C$.
		A parity-check matrix of $\C$ is a generator matrix of $\C^\perp$.
		We say that $\C$ is nondegenerate if the columns of $G_i$ are $\F_q$-linearly independent for $i\in \{1,\ldots,t\}$. 
	\end{definition}
	Without loss of generality, in this paper we will only consider nondegenerate codes.
	
	\begin{definition}
		The sum-rank metric covering radius of $\mC\subset \F_{q^m}^{\mathbf{n}}$ is defined to be:
		$$\rho_{\srk}(\mC):= \max \{ \min \{ d(x,c) : c \in \C\} : x \in\F_{q^m}^{\mathbf{n}}\}.$$
	\end{definition}
	
	Let $H=(H_1|\cdots|H_t) \in \F_{q^m}^{\mathbf{n}}$ be a parity-check matrix of a sum-rank metric code $\mC\subset \F_{q^m}^{\mathbf{n}}$. Then 
	$\rho_{\srk}(\mC)=\rho$, if and only if for every
	$v\in \F_{q^m}^{n-k}$ there exists $\lambda=(\lambda_1,\ldots,\lambda_t)\in \F_{q^m}^{1\times n_1}\times \cdots\times \F_{q^m}^{1\times n_t}$ with 
	$\textup{w}_{\textup{srk}}(\lambda)\le \rho$ such that $$v=H(\lambda_1,\ldots,\lambda_t)^T,$$ and $\rho$ is the least integer with this property.
	
	

	\subsection{$q$-Systems}
	We introduce some results regarding the connections of sum-rank metric codes and sets of subspaces, see \cite{neri2023geometry} for further details.

	\begin{definition}
		For each $i\in \{1,\dots,t\}$, let $\mathcal{U}_i$ be an $\F_q$-subspace of $\F_{q^m}^k$ of dimension $n_i$. 
		If the ordered $t$-tuple $\mathcal{U}=(\mathcal{U}_1,\ldots,\mathcal{U}_t)$ satisfies
		$ \langle \mathcal{U}_1, \ldots, \mathcal{U}_t \rangle_{\F_{q^m}}=\F_{q^m}^k$
		then $\mU$ is called an \emph{$[\bn,k]_{q^m/q}$ system}.
		We say that $\mU$ has dimension (or rank) ${\bf n}$.  
		A \emph{generator matrix} for $\mU$ is a $k\times N$ matrix over $\F_{q^m}$ of the form $G = (G_1 | \cdots | G_t)$,
		where for each $i$, $G_i$ is a generator matrix for the $[n_i,k]_{q^m/q}$ system $\mU_i$, i.e., such that the $\Fq$-span of the columns of each $G_i$ is $\mU_i$.
	\end{definition}
	
	For any $\Fq$-subspace $\mathcal V$ of $\F_{q^m}^k$ and $a \in \F_{q^m}$ we write $a {\mathcal V} :=\{ av : v \in {\mathcal V} \}$.
	
	\begin{definition}
		Two sum-rank systems $(\mathcal{U}_1,\ldots,\mathcal{U}_t)$ and $(\mathcal{V}_1,\ldots, \mathcal{V}_t)$ are equivalent if there exists an isomorphism $\varphi\in \mathrm{GL}(k,\F_{q^m})$, an element $\mathbf{a}=(a_1,\ldots,a_t)\in (\F_{q^m}^*)^t$ and a permutation $\sigma\in\mathcal{S}_t$, such that for every $i\in\{1,\dots,t\}
		$
		$$ \varphi(\mathcal{U}_i) = a_i\mathcal{V}_{\sigma(i)}.$$
	\end{definition}

	The following result allows us to establish a connection between systems and codes that will be very useful later on; see \cite{neri2023geometry} for further details.
	
	\begin{theorem}[\!{\cite[Theorem 3.1]{neri2023geometry}}]
		Let $\C$ be an $[\mathbf{n},k,d]_{q^m/q}$. Let $G=(G_1\lvert \ldots \lvert G_t)$ be a generator matrix of $\C$.
		Let $\mU_i \subseteq \F_{q^m}^k$ be the $\F_q$-span of the columns of $G_i$, for $i\in \{1,\ldots,t\}$.
		The sum-rank weight of an element $x G \in \C$, with $x=(x_1,\ldots,x_k) \in \F_{q^m}^k$ is
		\begin{equation}\label{eq:weight}
			\textup{w}_{\textup{srk}}(x G) = N - \sum_{i=1}^t \dim_{\F_q}(\mU_i \cap x^{\perp}),
		\end{equation}
		where $x^{\perp}=\{y=(y_1,\ldots,y_k) \in \F_{q^m}^k \colon \sum_{i=1}^k x_iy_i=0\}$. In particular, the minimum distance of $\C$ is given by:
		\begin{equation} \label{eq:distancedesign}
			d=N- \max\left\{ \sum_{i=1}^t \dim_{\F_q}(\mU_i \cap H)  \colon H\mbox{ is an } \F_{q^m}\mbox{-hyperplane of }\F_{q^m}^k  \right\}.
		\end{equation}
		In particular, $(\mU_1,\ldots,\mU_t)$ in an $[\mathbf{n},k,d]_{q^m/q}$-system. 
	\end{theorem}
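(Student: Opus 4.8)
\noindent\emph{Proof plan.} The idea is to analyse the sum-rank weight block by block and to turn each rank into a dimension count via rank--nullity over $\F_q$. Since the sum-rank weight is additive over the blocks, $w(xG) = \sum_{i=1}^t \rk(xG_i)$, so it suffices to compute each $\rk(xG_i)$. Write $G_i = [g_{i,1}\mid\cdots\mid g_{i,n_i}]$ with columns $g_{i,j}\in\F_{q^m}^k$, so that the $j$-th entry of the row vector $xG_i$ equals $\sum_{l=1}^k x_l (g_{i,j})_l$. First I would introduce the $\F_{q^m}$-linear functional $\phi_x\colon \F_{q^m}^k\to\F_{q^m}$, $\phi_x(v)=\sum_{l=1}^k x_l v_l$, whose kernel is exactly $x^\perp$, and restrict it to the $\F_q$-subspace $\mU_i$. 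The image of $\phi_x|_{\mU_i}$ is the $\F_q$-span of $\{\phi_x(g_{i,1}),\dots,\phi_x(g_{i,n_i})\}$ (because the $g_{i,j}$ generate $\mU_i$ over $\F_q$), which is precisely the $\F_q$-span of the entries of $xG_i$; hence $\dim_{\F_q}\mathrm{Im}(\phi_x|_{\mU_i}) = \rk(xG_i)$.

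The key step is then rank--nullity for the $\F_q$-linear map $\phi_x|_{\mU_i}$: since $\ker(\phi_x|_{\mU_i}) = \mU_i\cap x^\perp$ and $\dim_{\F_q}\mU_i = n_i$ by nondegeneracy, one gets $\rk(xG_i) = n_i - \dim_{\F_q}(\mU_i\cap x^\perp)$. Summing over $i\in\{1,\dots,t\}$ yields $w(xG) = N - \sum_{i=1}^t \dim_{\F_q}(\mU_i\cap x^\perp)$, which is \eqref{eq:weight}; the case $x=0$ is consistent, both sides being $0$.

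For the minimum distance, I would use that $\C$ is $\F_{q^m}$-linear and that $G$ has $\F_{q^m}$-rank $k$, so the nonzero codewords are exactly the $xG$ with $x\neq 0$; hence $d = \min_{x\neq 0} w(xG) = N - \max_{x\neq 0}\sum_{i=1}^t\dim_{\F_q}(\mU_i\cap x^\perp)$. Since the standard bilinear form on $\F_{q^m}^k$ is nondegenerate, $x\mapsto x^\perp$ sends the nonzero vectors onto the set of $\F_{q^m}$-hyperplanes of $\F_{q^m}^k$ (with $x$ and $\lambda x$ giving the same hyperplane), so the right-hand maximum is the same as the maximum over all hyperplanes $H$, giving \eqref{eq:distancedesign}. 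Finally, $(\mU_1,\dots,\mU_t)$ is an $[\mathbf{n},k,d]_{q^m/q}$-system: each $\mU_i$ has $\F_q$-dimension $n_i$ by nondegeneracy, $\langle\mU_1,\dots,\mU_t\rangle_{\F_{q^m}} = \F_{q^m}^k$ because the columns of $G$ span $\F_{q^m}^k$, and the minimum-distance clause is exactly \eqref{eq:distancedesign}.

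The only genuinely delicate point is the bookkeeping in the first paragraph: one must keep track of the fact that $\phi_x$, while $\F_{q^m}$-linear on the ambient space, is used only as an $\F_q$-linear map on the (in general not $\F_{q^m}$-linear) subspace $\mU_i$, and that its image, viewed as an $\F_q$-subspace of $\F_{q^m}$, is generated exactly by the coordinates of the block $xG_i$, whose $\F_q$-dimension is by definition $\rk(xG_i)$. Everything else is routine linear algebra together with the standard duality between hyperplanes and lines in a space equipped with a nondegenerate bilinear form.
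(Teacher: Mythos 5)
Your proof is correct, and the paper itself gives no proof of this statement --- it is quoted from \cite[Theorem 3.1]{neri2023geometry} --- so there is nothing in the text to diverge from; your argument (rank--nullity applied to the $\F_q$-linear restriction of the evaluation functional $\phi_x$ to each $\mU_i$, followed by the standard correspondence between nonzero $x$ and hyperplanes $x^\perp$) is exactly the standard route taken in that reference. The one bookkeeping point you flag, that $\mathrm{Im}(\phi_x|_{\mU_i})$ is the $\F_q$-span of the entries of $xG_i$ and hence has $\F_q$-dimension $\rk(xG_i)$, is handled correctly and relies on nondegeneracy only to identify $\dim_{\F_q}\mU_i$ with $n_i$.
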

	
	Moreover, there is a one-to-one correspondence  between equivalence  classes  of  sum-rank nondegenerate $[\mathbf{n},k,d]_{q^m/q}$ codes and equivalence classes of $[\mathbf{n},k,d]_{q^m/q}$-systems; see \cite{neri2023geometry}.

	\subsection{Linear sets}
	
	Let us define linear sets, which were introduced by Lunardon in \cite{lunardon1999normal} for the construction of blocking sets, and which have become a topic of significant research in recent years. A thorough discussion of linear sets is available in \cite{polverino2010linear}.
	
	\begin{definition}
		Let $V$ be a $k$-dimensional vector space over $\F_{q^m}$ and consider $\Lambda=\PG(V,\F_{q^m})=\PG(k-1,q^m)$.
		Let $\mU$ be an $\F_q$-subspace of $V$ of dimension $n$. Then the point-set:
		\[ L_\mU=\{\langle { u} \rangle_{\mathbb{F}_{q^m}} : { u}\in \mU\setminus \{{ 0} \}\}\subseteq \Lambda, \]
		is called an $\F_q$-\textit{linear set of rank $n$}.
	\end{definition}
	\begin{definition}
		Let $P=\langle v\rangle_{\F_{q^m}}$ be a point in $\Lambda$. The \textit{weight of $P$ in $L_\mU$} is defined to be: 
		\[ w_{L_\mU}(P)=\dim_{\F_q}(\mU\cap \langle v\rangle_{\F_{q^m}}). \] 
	\end{definition}
	A basic upper bound on the number of points that a linear set contains is
	\begin{equation}\label{eq:card}
		|L_\mU| \leq \frac{q^n-1}{q-1}.
	\end{equation}
	We say that, $L_\mU$ is \textit{scattered} if it meets this number of points, or equivalently, if all points of $L_\mU$ have weight one.

	\section{Sum-rank saturating systems}\label{sec:SRsatsyst}
	In this section we will discuss the main object of this paper, namely, sum-rank saturating systems. 
	
	We start by recalling the definition of a $\rho$-saturating set.
	\begin{definition}
		Let $\mS \subseteq  \mathrm{PG}(k-1, q^m)$.
		\begin{itemize}
			\item[{\rm (a)}]
			A point $Q \in \mathrm{PG}(k-1, q^m)$ is said to be $\rho$-{\em saturated} by $\mS$ if there exist $\rho+1$ points  $P_1,\ldots,P_{\rho+1}\in \mathcal{S}$ such that $Q\in \langle P_1,\ldots,P_{\rho+1}\rangle_{\F_{q^m}}$. We also say that $\mS$ $\rho$-{\em saturates} $Q$.
			\item[{\rm (b)}]
			The set $\mS$ is called a $\rho$-\emph{saturating} set of $\mathrm{PG}(k-1, q^m)$ if every point $Q \in \mathrm{PG}(k-1, q^m)$ is $\rho$-saturated by $\mS$ and $\rho$ is the smallest value with this property.  
		\end{itemize}
	\end{definition}
	
	The following is the main object of this paper.
	
	\begin{definition}
		An $[\mathbf{n},k]_{q^m/q}$ system $\mU$ is called sum-rank-$\rho$-saturating if $L_{\mU_1}\cup\cdots\cup L_{\mU_t}$ is a $(\rho-1)$-saturating set.
	\end{definition}
	
	As in the rank-metric case, we may get a characterisation of sum-rank saturating systems.
	\begin{theorem}\label{th:tfaeabc}
		Let $\mU$ be an $[\mathbf{n},k]_{q^m/q}$ system and let $G$ be any generator matrix of $\mU$. 
		The following are equivalent:
		\begin{itemize}
			\item[{\rm (a)}]\label{th:tfaea} $\mathcal{U}$ is sum-rank-$\rho$-saturating.
			\item[{\rm (b)}]\label{th:tfaeb}  For each vector $v\in \F_{q^m}^k$ there exists $\lambda=(\lambda_1,\ldots,\lambda_t)\in \F_{q^m}^{1\times n_1}\times \ldots\times \F_{q^m}^{1\times n_t}$ with 
			$\textup{w}_{\textup{srk}}(\lambda)\le \rho$ such that $$v=G(\lambda_1,\ldots,\lambda_t)^T,$$ and $\rho$ is the least integer with this property.
			\item[{\rm (c)}]\label{th:tfaec}  We have
			\[\F_{q^m}^k=\bigcup_{\substack{(\mathcal{S}_i: i \in [t]) :\:\mathcal{S}_i 
					\leq_{\F_q}\mathcal{U}_i,\\\sum_{i=1}^t\dim_{\F_q}\mathcal{S}_i\leq \rho}}
			\left(\bigcup_{i=1}^t\langle \mathcal{S}_i \rangle_{\F_{q^m}}\right)
			\]
			and $\rho$ is the least integer with this property.
		\end{itemize}
	\end{theorem}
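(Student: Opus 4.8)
The plan is to prove the cycle of implications $(a)\Rightarrow(c)\Rightarrow(b)\Rightarrow(a)$, translating the geometric saturation condition into linear-algebraic language and then into the matrix form, using the weight formula from the excerpt as the bridge. The key observation is that a point $Q=\langle v\rangle_{\F_{q^m}}$ lies in a span $\langle P_1,\dots,P_\rho\rangle_{\F_{q^m}}$ with each $P_j\in L_{\mU_1}\cup\cdots\cup L_{\mU_t}$ precisely when $v$ can be written as an $\F_{q^m}$-linear combination of $\rho$ vectors drawn from the $\mU_i$'s. Grouping those vectors by which $\mU_i$ they come from, and absorbing the $\F_{q^m}$-scalars, this is the same as saying $v\in\sum_{i=1}^t\langle\mS_i\rangle_{\F_{q^m}}$ for $\F_q$-subspaces $\mS_i\leq_{\F_q}\mU_i$ with $\sum_i\dim_{\F_q}\mS_i\le\rho$; note $\langle\mS_1\rangle_{\F_{q^m}}+\cdots+\langle\mS_t\rangle_{\F_{q^m}}=\langle\mS_1,\dots,\mS_t\rangle_{\F_{q^m}}$ is spanned by at most $\rho$ vectors, which is why the union in (c) can be taken over such tuples. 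This gives $(a)\Leftrightarrow(c)$ once one is careful that the $\rho$ in the definition of $\rho$-saturating is the \emph{least} such integer, matching the minimality clauses in (b) and (c); the shift by one ($(\rho-1)$-saturating in the definition of a sum-rank $\rho$-saturating system versus $\mathrm{wt}_{\srk}\le\rho$ in (b)) must be tracked carefully, since $\rho$ points spanning a projective subspace corresponds to projective dimension $\rho-1$.

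For $(c)\Leftrightarrow(b)$ I would argue as follows. Fix a generator matrix $G=[G_1\mid\cdots\mid G_t]$ of $\mU$, so the columns of $G_i$ are an $\F_q$-basis of $\mU_i$. For $\lambda=(\lambda_1,\dots,\lambda_t)$ with $\lambda_i\in\F_{q^m}^{1\times n_i}$, the product $G\lambda^T=\sum_{i=1}^t G_i\lambda_i^T$ is an $\F_{q^m}$-linear combination of the columns of the $G_i$; the combination coming from block $i$ lies in $\langle\mU_i\rangle_{\F_{q^m}}$, and its "$\F_q$-support cost" is exactly $\rk(\lambda_i)$ — more precisely, $G_i\lambda_i^T\in\langle\mS_i\rangle_{\F_{q^m}}$ for the subspace $\mS_i\le_{\F_q}\mU_i$ generated by the columns of $G_i$ indexed appropriately, and one can always choose $\lambda_i$ so that $\dim_{\F_q}\mS_i=\rk(\lambda_i)$. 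Summing, $\mathrm{wt}_{\srk}(\lambda)=\sum_i\rk(\lambda_i)$ plays the role of $\sum_i\dim_{\F_q}\mS_i$. Conversely, given subspaces $\mS_i\le_{\F_q}\mU_i$ with $v\in\sum_i\langle\mS_i\rangle_{\F_{q^m}}$, pick $\F_q$-bases of the $\mS_i$, express $v$ in terms of them with $\F_{q^m}$-coefficients, and read off a $\lambda$ of the required sum-rank weight by pulling the coefficients back along $G$ (using nondegeneracy of $\mU$, i.e. $\F_q$-independence of the columns of each $G_i$, to make the pullback well-defined on each block). This is essentially the same manipulation used to prove the weight formula \eqref{eq:weight}, just run in the opposite direction: instead of computing $w(xG)$ from $x$, we are realising a prescribed sum-rank pattern $\lambda$ by a suitable $v$.

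The main obstacle I anticipate is bookkeeping rather than conceptual: carefully matching up the three different minimality statements (the "$\rho$ is smallest" in the definition of $\rho$-saturating set, hence in the definition of sum-rank $\rho$-saturating system, versus "$\rho$ is the least integer" in both (b) and (c)) and, relatedly, keeping straight the off-by-one between the number of spanning points and projective dimension. Concretely, I would first prove the "for some $\rho'\le\rho$" version of each equivalence — i.e. that the three existence statements (ignoring minimality) hold for a common value — and only at the end observe that the set of $\rho$ for which each holds is upward closed with the same least element, so the minimality clauses transfer automatically. A secondary point to handle with care is the degenerate situation where some $\mS_i$ might be taken to be $\{0\}$, or where a chosen point $P_j$ could lie in $L_{\mU_i}$ for more than one $i$; allowing $\F_q$-subspaces (rather than insisting on exactly one point per block) in (c) is precisely what makes the statement robust to these cases, and I would flag that the union in (c) ranges over \emph{all} tuples of subspaces with the bounded total dimension, including ones with repeated or trivial components.
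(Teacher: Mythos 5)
Your proposal is correct and follows essentially the same route as the paper: the paper's detailed argument is the direct implication (a)$\Rightarrow$(b) via exactly the basis-expansion bookkeeping that you split into (a)$\Rightarrow$(c) and (c)$\Rightarrow$(b) (grouping the spanning points by block and converting $\F_q$-subspaces $\mS_i\le_{\F_q}\mU_i$ of total dimension $\le\rho$ into blocks $\lambda_i$ with $\sum_i\rk(\lambda_i)\le\rho$), and it defers the remaining implications to the rank-metric analogue, handling minimality by the same upward-closure observation you describe. One small point in your favour: you correctly read the inner $\bigcup_{i=1}^t\langle\mS_i\rangle_{\F_{q^m}}$ in (c) as the sum $\sum_{i=1}^t\langle\mS_i\rangle_{\F_{q^m}}$, which is the reading needed for the equivalence with (b) when $t>1$.
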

	\begin{proof}
		$(a) \implies (b)$ Let $Q = \langle v \rangle_{\F_{q^m}} \in PG(k-1,q^m)$. Since $\mathcal{U}$ is sum-rank-$\rho$-saturating, there exist $P_1,\ldots, P_\rho \in L_{\mU_1} \cup \cdots \cup L_{\mU_t}$
		such that $Q\in \langle P_1,\ldots,P_{\rho}\rangle_{\F_{q^m}}$.
		For each $i$, let $P_i=\langle w_i \rangle_{\F_{q^m}}$ for some $w_i \in \mU_{w(i)}$ and $w(i) \in [t]$. 
		Then $\displaystyle v= \sum_{j=1}^{\rho} \gamma_j w_j$ for some $\gamma_j \in \Fqm$. For each $\ell \in [t]$, let the set $B_\ell=\{u_{\ell,1},\ldots,u_{\ell,n_\ell}\}$ be an $\Fq$-basis of $\mU_\ell$.  
		For each $i$, there exist $\tau_{\ell,i,r} \in \Fq$ such that
		$\displaystyle w_i = \sum_{\ell=1}^t \sum_{r=1}^{n_\ell} \tau_{\ell,i,r}u_{\ell,r}$, where $\tau_{\ell,i,r}=0$ whenever $\ell \neq\textup{w}_{\textup{srk}}(i)$.
		
		We may now express $v$ as follows:
		\[
		v = \sum_{\ell=1}^t \sum_{r=1}^{n_\ell} u_{\ell,r} \sum_{i=1}^\rho \gamma_i \tau_{\ell,i,r}.
		\]
		Define the matrices $\tau^{(\ell)} = (\tau_{\ell,i,r}) \in \Fq^{\rho \times n_\ell}$, and hence define:
		\begin{align*}
			\lambda =(\lambda_1,\ldots,\lambda_t) &:= (\gamma_1,\ldots,\gamma_\rho) [\tau^{(1)} | \cdots | \tau^{(t)}] \in \F_{q^m}^{1\times n_1}\times \cdots\times \F_{q^m}^{1\times n_t}
		\end{align*}
		Each matrix $\tau^{(\ell)}$ has every $j$-th row all-zeroes if $w(j)\neq \ell$ and, furthermore, there are at most $\rho$ distinct nonzero matrices $\tau^{(\ell)}$.
		We have
		$\lambda_j = \gamma_i \tau^{(j)}_i$, for $i,j$ satisfying $w(i)=j$.
		It follows that $\textup{w}_{\textup{srk}}(\lambda) \leq \rho$.\\
		
		The proofs that (b) implies (c) and (c) implies (a) are very similar to those of \cite[Theorem 2.3]{bonini2022saturating}.\\ 
	\end{proof}	
	\begin{definition}
		Let $\mU$ be an $[\mathbf{n},k]_{q^m/q}$ system. For each positive integer $\rho$, 
		we define 
		\[
		{\mathbb S}_{\rho}(\mU):=\bigcup_{\substack{(\mathcal{S}_i: i \in [t]) :\:\mathcal{S}_i 
				\leq_{\F_q}\mathcal{U}_i,\\\sum_{i=1}^t\dim_{\F_q}\mathcal{S}_i\leq \rho}}
		\left(\bigcup_{i=1}^t (\mathcal{S}_i \otimes {\F_{q^m}})\right).
		\] 
	\end{definition}
	It is immediate from Theorem \ref{th:tfaeabc} that $\mU$ is sum-rank-$\rho$-saturating
	if $\rho$ is the least integer satisfying $\F_{q^m}^k = {\mathbb S}_{\rho}(\mU)$.
	
	The following statement is the sum-rank analogue of \cite[Theorem 2.5]{bonini2022saturating}. The proof is very similar and hence is omitted.
	
	\begin{theorem}
		Let $\mU$ be an $[\mathbf{n},k]_{q^m/q}$ system associated with a code $\C$. The following are equivalent.
		\begin{itemize}
			\item[{\rm (a)}] $\mU$ is sum-rank-$\rho$-saturating.
			\item[{\rm (b)}] $\rho_{srk}(\C^\perp)=\rho$.
		\end{itemize}
	\end{theorem}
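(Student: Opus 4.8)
The plan is to deduce the statement directly from the equivalence $(a)\Leftrightarrow(b)$ of Theorem~\ref{th:tfaeabc}, after rewriting the covering radius $\rho_{srk}(\C^\perp)$ in the language of syndromes. First I would use the code--system correspondence recalled after Theorem~\ref{th:tfaeabc}: a generator matrix $G$ of the system $\mU$ is a generator matrix of the associated code $\C$, and since $\C$ is nondegenerate $G$ has $\F_{q^m}$-rank $k$. Hence $G$ is a parity-check matrix of $\C^\perp$, and the syndrome map $\sigma\colon\F_{q^m}^{\mathbf n}\to\F_{q^m}^k$, $y\mapsto Gy^T$, is $\F_{q^m}$-linear, surjective, with kernel $\C^\perp$. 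In particular the cosets of $\C^\perp$ in $\F_{q^m}^{\mathbf n}$ are in bijection with $\F_{q^m}^k$ via $\sigma$.

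Next I would unwind the definition of the covering radius. For $y\in\F_{q^m}^{\mathbf n}$ we have $d(y,\C^\perp)=\min_{c\in\C^\perp}w(y-c)$, and since $\C^\perp$ is an $\F_{q^m}$-subspace (so $y-\C^\perp=y+\C^\perp=\sigma^{-1}(\sigma(y))$), as $c$ ranges over $\C^\perp$ the vector $y-c$ ranges over the whole coset $\sigma^{-1}(Gy^T)$. Therefore
\[
\rho_{srk}(\C^\perp)=\max_{y\in\F_{q^m}^{\mathbf n}}d(y,\C^\perp)=\max_{v\in\F_{q^m}^k}\ \min\bigl\{{\rm wt}_{\srk}(\lambda)\colon \lambda\in\F_{q^m}^{\mathbf n},\ G\lambda^T=v\bigr\};
\]
that is, $\rho_{srk}(\C^\perp)$ is the maximum over all syndromes $v$ of the coset-leader weight, the minimum sum-rank weight of a preimage of $v$ under $\sigma$.

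Finally I would compare this with condition (b) of Theorem~\ref{th:tfaeabc}: $\mU$ is sum-rank $\rho$-saturating exactly when $\rho$ is the least integer such that every $v\in\F_{q^m}^k$ admits some $\lambda$ with $G\lambda^T=v$ and ${\rm wt}_{\srk}(\lambda)\le\rho$, i.e. the least integer with $\max_{v}\min\{{\rm wt}_{\srk}(\lambda)\colon G\lambda^T=v\}\le\rho$. By the displayed identity this least integer is precisely $\rho_{srk}(\C^\perp)$, which yields both implications simultaneously. The argument is essentially a dictionary translation of the rank-metric proof of \cite[Theorem 2.5]{bonini2022saturating}; the only points needing a word of care are the reduction of $d(y,\C^\perp)$ to the coset-leader formulation (using that $\C^\perp$ is $\F_{q^m}$-linear) and the surjectivity of $\sigma$ (immediate from nondegeneracy of $\C$), so I do not expect a genuine obstacle.
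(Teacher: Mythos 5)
Your argument is correct and is exactly the coset-leader/syndrome translation that the paper has in mind when it omits the proof with a reference to \cite[Theorem 2.5]{bonini2022saturating}: identify $G$ as a parity-check matrix of $\C^\perp$, express the covering radius as $\max_v \min\{{\rm wt}_{\srk}(\lambda) : G\lambda^T = v\}$, and match this against condition (b) of Theorem~\ref{th:tfaeabc}. No gaps; the two points you flag (linearity of $\C^\perp$ for the coset reduction, surjectivity of the syndrome map from nondegeneracy) are indeed the only things to check.
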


	\begin{definition}\label{def:dirsum}
		For $i=1,2$, let $\mU_i$ be a sum-rank-$\rho_i$-saturating $[{\bf n}_i,k_i]_{q^m/q}$ system associated with a code $\C_i$ that has generator matrix $G_i$. We define the \emph{direct sum} of $\mU_1$ and $\mU_2$, which we denote by $\mU_1\oplus \mU_2$, to be the $[({\bf n}_1,{\bf n}_2),k_1+k_2]_{q^m/q}$ system associated with the direct sum of $\C_1$ and $\C_2$, i.e. the code whose generator matrix is 
		\[
		G_1\oplus G_2:=\begin{bmatrix}G_1 & 0\\ 0  &G_2\end{bmatrix}.
		\]
	\end{definition}
	
	It is straightforward to establish the following (c.f. \cite{bonini2022saturating}).
	
	\begin{theorem}
		For $i\in [t]$, let $\mU_i$ be a sum-rank-$\rho_i$-saturating $[{\bf n}_i,k_i]_{q^m/q}$ system.
		Then $\mU_1 \oplus \cdots \oplus \mU_t$ is an $[({\bf n}_1,\dots,{\bf n}_t),k_1+\cdots +k_t]_{q^m/q}$ system and is sum-rank-$\rho$-saturating, for some $\rho \leq \rho_1+\cdots +\rho_t$.
	\end{theorem}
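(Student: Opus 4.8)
The plan is to reduce the claim, via characterization~(b) of Theorem~\ref{th:tfaeabc}, to a statement about the block-diagonal generator matrix. Iterating Definition~\ref{def:dirsum}, the system $\mU_1\oplus\cdots\oplus\mU_t$ is by construction associated with the code generated by $G:=G_1\oplus\cdots\oplus G_t$, where $G_i$ is a generator matrix of $\mU_i$. First I would check that $\mU_1\oplus\cdots\oplus\mU_t$ really is an $[({\bf n}_1,\ldots,{\bf n}_t),k_1+\cdots+k_t]_{q^m/q}$ system: writing ${\bf n}_i=(n_{i,1},\ldots,n_{i,s_i})$ and $G_i=[G_{i,1}\mid\cdots\mid G_{i,s_i}]$, the $(i,j)$-th block of $G$ is obtained from $G_{i,j}$ by adjoining zero rows outside the $i$-th row-block; adjoining zero rows preserves $\F_q$-linear independence of columns, so each block of $G$ has $\F_q$-independent columns and the $\F_q$-span of its columns has dimension $n_{i,j}$, while the $\F_{q^m}$-span of all columns of $G$ is $\bigoplus_{i=1}^t\F_{q^m}^{k_i}=\F_{q^m}^{k_1+\cdots+k_t}$ because each $G_i$ generates $\F_{q^m}^{k_i}$. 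Up to permuting the blocks so that the length tuple is non-increasing --- which does not affect the sum-rank metric since block permutations are equivalences --- this is a system of the asserted length and dimension.

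For the saturating property I would invoke Theorem~\ref{th:tfaeabc}(b). Fix $v\in\F_{q^m}^{k_1+\cdots+k_t}$ and decompose $v=(v^{(1)},\ldots,v^{(t)})$ along $\F_{q^m}^{k_1+\cdots+k_t}=\bigoplus_{i=1}^t\F_{q^m}^{k_i}$, so $v^{(i)}\in\F_{q^m}^{k_i}$. Since $\mU_i$ is sum-rank $\rho_i$-saturating, Theorem~\ref{th:tfaeabc}(b) applied to $G_i$ yields $\lambda^{(i)}\in\F_{q^m}^{1\times n_{i,1}}\times\cdots\times\F_{q^m}^{1\times n_{i,s_i}}$ with ${\rm wt}_{\srk}(\lambda^{(i)})\le\rho_i$ and $v^{(i)}=G_i(\lambda^{(i)})^T$. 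Concatenating, set $\lambda:=(\lambda^{(1)},\ldots,\lambda^{(t)})$, a tuple with $s_1+\cdots+s_t$ blocks. Since $G$ is block-diagonal we get $G\lambda^T=(G_1(\lambda^{(1)})^T,\ldots,G_t(\lambda^{(t)})^T)=v$, and because the sum-rank weight of a tuple is the sum of the ranks of its blocks it is additive under concatenation, whence ${\rm wt}_{\srk}(\lambda)=\sum_{i=1}^t{\rm wt}_{\srk}(\lambda^{(i)})\le\rho_1+\cdots+\rho_t$. Thus every $v$ admits a preimage under $G$ of sum-rank weight at most $\rho_1+\cdots+\rho_t$, and by Theorem~\ref{th:tfaeabc}(b) the least integer $\rho$ with this property satisfies $\rho\le\rho_1+\cdots+\rho_t$.

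There is no real obstacle here; the argument is direct, and the only thing requiring attention is index bookkeeping --- in particular disambiguating the two roles of $t$ (the number of summands versus the internal block count $s_i$ of each $\mU_i$) and matching up the concatenated length tuples. It is worth noting that the proof is reversible: choosing each $v^{(i)}$ to be a vector whose cheapest $G_i$-preimage has sum-rank weight exactly $\rho_i$ (such a vector exists by minimality of $\rho_i$), the block-diagonal shape of $G$ forces every preimage of $v=(v^{(1)},\ldots,v^{(t)})$ to have sum-rank weight at least $\rho_1+\cdots+\rho_t$; hence in fact $\rho=\rho_1+\cdots+\rho_t$. We record only the inequality, which is all that is used later.
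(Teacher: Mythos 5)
Your proof is correct. The paper itself omits the argument for this theorem (it is stated as ``straightforward to establish''), so there is no official proof to compare against; your reduction to characterization (b) of Theorem~\ref{th:tfaeabc} --- decompose $v$ along $\bigoplus_i \F_{q^m}^{k_i}$, pull back each component through $G_i$ with sum-rank weight at most $\rho_i$, and use additivity of ${\rm wt}_{\srk}$ under concatenation of blocks --- is exactly the expected one. It also runs parallel to the proof the paper does give for the more general $f$-sum in Proposition~\ref{prop:DirectSum}, which phrases the same idea through characterization (c) via the sets ${\mathbb S}_\rho$; for $f=0$ the two formulations are interchangeable, and yours has the advantage of making the weight bookkeeping explicit. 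Your closing observation that the block-diagonal structure forces equality $\rho=\rho_1+\cdots+\rho_t$ (by picking each $v^{(i)}$ whose cheapest $G_i$-preimage costs exactly $\rho_i$) is also correct and is strictly more than the statement claims; the paper only records the inequality because the $f$-sum generalization does not preserve equality. The housekeeping points you flag --- reordering the concatenated length tuple to be non-increasing, and nondegeneracy of the padded blocks --- are genuine but harmless, and you dispatch them correctly.
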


	\begin{definition}
		A sum-rank-$\rho$-saturating system $\mU_1 \oplus \cdots \oplus \mU_t$ is called \emph{reducible} if there exists $i\in\{1,\ldots,t\}$ such that the system $\mU_1 \oplus \cdots \mU_{i-1} \oplus \mU_{i+1} \cdots \oplus \mU_t$ is sum-rank-$\rho$-saturating. Otherwise, the system is called \emph{irreducible}.
	\end{definition}
	
	\section{Bounds on the dimension of sum-rank saturating systems}\label{sec:bounds}
	
	As in the case of Hamming-metric and rank-metric codes, it is interesting to know the shortest length of any sum-rank metric code of a given dimension and covering radius $\rho$, or equivalently, the least rank of any sum-rank-$\rho$-saturating system in a given vector space. 
	
	We start with a bound which follows from the geometric characterisation of our systems. In the proof, we will use the following well-known estimates:
	\begin{align}
		\qbin{a}{b}{q}&<f(q)\, q^{b(a-b)}, && \hspace{-3em} \mbox{for } a,b \in \mathbb N, \label{est1}\\
		q^{e_1}+\ldots+q^{e_r}&< \frac{q}{q-1}q^{e_r}, &&\hspace{-3em} \mbox{for } e_i \in \mathbb Z, \ 0\leq e_1<\ldots<e_r. \label{est3}
	\end{align}
	where $f(q)=\prod_{i=1}^{+\infty}(1-q^{-i})^{-1}$.
	
	For each $n=(n_1,\dots,n_t)\in {\mathcal N}$, we define 
	$$\left\|n\right\|:=\sum_{1\leq i<j\leq t}\left(n_j-n_i\right)^2.$$

	\begin{theorem}\label{thm:bound}
		Let $\mathcal{U}$ be a sum-rank-$\rho$-saturating $[\mathbf{n},k]_{q^m/q}$ system.
		Then
		\[
		q^{m\rho}\sum_{\bs \in {\mathcal N},|s|=\rho} \qbin{\bn}{\bs}{q} \geq q^{mk}.
		\]
		In particular, 
		\begin{equation}\label{eq:lowerbound}
			\frac{1}{4t}\cdot \left\|n\right\|+\frac{\rho(|n|-\rho)}{t}+2t\geq m(k-\rho).    
		\end{equation}
	\end{theorem}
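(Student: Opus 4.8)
The plan is to prove the displayed counting inequality $q^{m\rho}\sum_{\bs\in\mN,\,|\bs|=\rho}\qbin{\bn}{\bs}{q}\ge q^{mk}$ by a direct sphere‑covering argument built on the geometric characterisation in Theorem~\ref{th:tfaeabc}, and then to derive \eqref{eq:lowerbound} from it by taking base‑$q$ logarithms and optimising the resulting exponent; throughout I write $N:=|n|=n_1+\cdots+n_t$. For the counting inequality, recall (from the remark following the definition of $\mathbb S_\rho(\mU)$, which rests on Theorem~\ref{th:tfaeabc}(c)) that $\F_{q^m}^k=\mathbb S_\rho(\mU)$. Since the sum‑rank covering radius never exceeds $N$ we have $\rho\le N$, so in forming $\mathbb S_\rho(\mU)$ one may restrict to tuples $(\mathcal S_1,\dots,\mathcal S_t)$ with $\mathcal S_i\le_{\F_q}\mU_i$ and $\sum_{i=1}^t\dim_{\F_q}\mathcal S_i$ \emph{equal} to $\rho$ — enlarging any $\mathcal S_i$ inside $\mU_i$ only enlarges $\bigcup_i\langle\mathcal S_i\rangle_{\F_{q^m}}$, so nothing is lost. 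For a fixed type vector $\bs=(s_1,\dots,s_t)$ with $|\bs|=\rho$, the number of such tuples with $\dim_{\F_q}\mathcal S_i=s_i$ is $\prod_{i=1}^t\qbin{n_i}{s_i}{q}=\qbin{\bn}{\bs}{q}$, and the only real content is the bound
\[
\left|\bigcup_{i=1}^t\langle\mathcal S_i\rangle_{\F_{q^m}}\right|\le q^{m\rho}:
\]
each $\langle\mathcal S_i\rangle_{\F_{q^m}}$ is an $\F_{q^m}$‑space of dimension at most $s_i$, and since they all contain $0$, an induction on $t$ using $(q^{ma}-1)(q^{mb}-1)\ge0$ shows that a union of $\F_{q^m}$‑spaces of dimensions $s_1,\dots,s_t$ has at most $q^{m(s_1+\cdots+s_t)}=q^{m\rho}$ elements. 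Summing over all admissible $\bs$ and all tuples gives $q^{mk}=|\F_{q^m}^k|\le q^{m\rho}\sum_{|\bs|=\rho}\qbin{\bn}{\bs}{q}$.

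To deduce \eqref{eq:lowerbound}, divide by $q^{m\rho}$ and take $\log_q$, so that it suffices to bound $\log_q\!\bigl(\sum_{|\bs|=\rho}\qbin{\bn}{\bs}{q}\bigr)$ above by $M+2t$, where $M:=\tfrac1{4t}\sum_{i<j}(n_j-n_i)^2+\tfrac{\rho(N-\rho)}{t}$. By \eqref{est1}, $\qbin{\bn}{\bs}{q}=\prod_i\qbin{n_i}{s_i}{q}<f(q)^t\,q^{g(\bs)}$ with $g(\bs):=\sum_{i=1}^t s_i(n_i-s_i)$. As each $s\mapsto s(n_i-s)$ is concave, so is $g$, and maximising $g$ over the reals subject to $\sum_i s_i=\rho$ (Lagrange multipliers: $s_i=\tfrac{n_i}{2}-\tfrac{N-2\rho}{2t}$, then complete the square) gives $g(\bs)\le\tfrac14\sum_i n_i^2-\tfrac{(N-2\rho)^2}{4t}$ for every admissible $\bs$; using $\sum_{i<j}(n_j-n_i)^2=t\sum_i n_i^2-N^2$ one checks this last quantity equals $M$. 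Hence $\sum_{|\bs|=\rho}\qbin{\bn}{\bs}{q}<f(q)^t\,q^M$ times the number of summands, and it remains to show that $\log_q$ of $f(q)^t$ times the number of summands is at most $2t$; for this one invokes the standard estimate $f(q)<q^2$ together with \eqref{est3}, grouping the summands by the integer value $g(\bs)\in[0,M]$ and using that these values decay quadratically as $\bs$ moves away from the optimal type vector, so that the sum stays within a controlled factor of its largest term.

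I expect this last bookkeeping — pinning the additive error down to exactly $2t$ rather than to some larger quantity in $t$ and the block lengths — to be the main obstacle: the crude ``number of summands times largest summand'' estimate is too lossy, and one has to combine genuinely the quadratic decay of $g$ about its maximiser with \eqref{est3} and $f(q)<q^2$. The remaining ingredients — the covering identity $\F_{q^m}^k=\mathbb S_\rho(\mU)$, the cardinality sublemma for unions of $\F_{q^m}$‑subspaces, and the Lagrange optimisation identifying the extremal exponent with $M$ — are routine.
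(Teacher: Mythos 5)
Your proof of the first displayed inequality is complete and correct, and it follows the same route as the paper, which simply asserts $q^{m\rho}\sum_{|\bs|=\rho}\qbin{\bn}{\bs}{q}\ge |{\mathbb S}_\rho(\mU)|=q^{mk}$ without spelling out the union-of-subspaces cardinality bound and the reduction to tuples with $\sum_i\dim_{\F_q}\mathcal S_i=\rho$ that you supply. Your optimisation of $g(\bs)=\sum_i s_i(n_i-s_i)$ subject to $\sum_i s_i=\rho$ is also correct and agrees with the paper's (the paper eliminates $s_t$ and differentiates, you use Lagrange multipliers; same maximiser, and your identity $\tfrac14\sum_i n_i^2-\tfrac{(N-2\rho)^2}{4t}=M$ checks out).

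The genuine gap is exactly the step you flag at the end, and the reason you get stuck is that you have set it up the wrong way: the paper never multiplies the largest summand by the number of summands. After the estimate $\sum_{|\bs|=\rho}\qbin{\bn}{\bs}{q} < f(q)^t\sum_{|\bs|=\rho}q^{g(\bs)}$, the paper applies \eqref{est3} \emph{directly to the inner sum} $\sum_{\bs}q^{g(\bs)}$ --- a sum of powers of $q$ whose integer exponents are bounded above by $M$ --- to conclude $\sum_{\bs}q^{g(\bs)}<\tfrac{q}{q-1}q^{M}$, so that the whole quantity is below $\tfrac{q\,f(q)^t}{q-1}\,q^{M}$. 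The additive term $2t$ is then nothing more than $\log_q$ of this prefactor: the paper uses $\tfrac{q\,f(q)^t}{q-1}\le q^{t}$ for $q>2$ and $\le q^{2t}$ for $q=2$, so no analysis of multiplicities or of the quadratic decay of $g$ away from its maximiser is required, and no count of the summands ever enters. That said, your unease about this step is not entirely misplaced: \eqref{est3} as stated requires the exponents $e_1<\dots<e_r$ to be \emph{distinct}, whereas distinct type vectors $\bs$ can share the same value of $g(\bs)$, so even the paper's application is rougher than it looks and would need the multiplicities to be absorbed somewhere to be fully rigorous. But the intended mechanism for producing the clean ``$+2t$'' is the one just described, and your proposal as written does not close this step.
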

	\begin{proof}
		
		Since $\mU$ is a sum-rank-$\rho$-saturating $[\mathbf{n},k]_{q^m/q}$ system, we have that ${\mathbb S}_\rho(\mU) = \F_{q^m}^k$. Therefore
		\[
		q^{m\rho}\sum_{\bs \in {\mathcal N},|s|=\rho} \qbin{\bn}{\bs}{q} \geq | {\mathbb S}_\rho(\mU)|=  |\F_{q^m}^k|=q^{mk}.
		\]
		By \eqref{est1}, we have:
		\begin{align*}
			\sum_{\bs \in {\mathcal N},|s|=\rho} \qbin{\bn}{\bs}{q}
			< \sum_{\bs \in {\mathcal N},|s|=\rho} \prod_{i=1}^t f(q)q^{s_i(n_i-s_i)}
			= f(q)^t\sum_{\bs \in {\mathcal N},|s|=\rho}  q^{\sum_{i=1}^ts_i(n_i-s_i)}
		\end{align*}
		We proceed by studying the following quantity:
		\[p(s_1,\ldots,s_{t-1}):=\sum_{i=1}^{t-1}s_i(n_i-s_i)+\left(\rho-\sum_{i=1}^{t-1}s_i\right)\left(n_{t}-\rho+\sum_{i=1}^{t-1}s_i\right).\]
		Notice that
		\[\frac{\partial p}{\partial  s_i}(s_1,\ldots,s_{t-1})=n_i-n_t+2\rho-4s_i-2\sum_{j\neq i}s_j.\]
		Since,
		\[(t-1)\frac{\partial p}{\partial  s_i}(s_1,\ldots,s_{t-1})-\sum_{j\neq i}\frac{\partial p}{\partial  s_j}(s_1,\ldots,s_{t-1})=(t-1)n_i-\sum_{j\neq i}n_j+2\rho-2ts_i,\]
		and since we easily see that the maximum of $p(s_1,\ldots,s_{t-1})$ is achieved for
		\[s_i=\frac{1}{2t}\left((t-1)n_i-\sum_{j\neq i}n_j+2\rho\right),\]
		we get:
		\[p(s_1,\ldots,s_{t-1})=\frac{1}{4t}\cdot \sum_{1\leq i<j\leq t}\left(n_j-n_i\right)^2+\frac{\rho(|n|-\rho)}{t}.\]
		Therefore, by \eqref{est3}, we get
		\[
		\frac{q\cdot f(q)^t}{q-1}\cdot
		q^{ \left(\frac{1}{4t}\cdot \left\|n\right\|+\frac{\rho(|n|-\rho)}{t} \right)}> q^{m(k-\rho)}.
		\]
		When $q> 2$, since
		$\displaystyle \frac{q\cdot f(q)^t}{q-1}\leq q^{t}$,
		we get
		\[\frac{1}{4t}\cdot \left\|n\right\|+\frac{\rho(|n|-\rho)}{t}+t\geq m(k-\rho),\]
		while if $q=2$ we obtain 
		\[\frac{1}{4t}\cdot \left\|n\right\|+\frac{\rho(|n|-\rho)}{t}+2t\geq m(k-\rho).\]
	\end{proof}

	\begin{remark}
		We have that $f(q)\longrightarrow 1$ as $q \longrightarrow \infty$, and so asymptotically $\frac{q f(q)^t}{q-1} \longrightarrow 1$ as $q \longrightarrow \infty$. For this reason, as $q$ grows, we may replace (\ref{eq:lowerbound}) with
		\begin{equation*}
			\frac{1}{4t}\cdot \left\|n\right\|+\frac{\rho(|n|-\rho)}{t}\geq m(k-\rho), 
		\end{equation*}
		for sufficiently large $q$.
		Indeed, even for relatively small values of $q$, $\frac{q f(q)^t}{q-1}$ takes values much smaller than $q$, for $t$ not exceeding $q$. For example, for $q=211,t=20$, we have $\frac{q f(q)^t}{q-1} \approx 1.105407$; for $q=111,t=111$ we have $\frac{q f(q)^t}{q-1} \approx 2.780617$.  
	\end{remark}

	\begin{remark}
		For $t=1$ (the rank-metric case), the bound coincides asymptotically with the one obtained in \cite{bonini2022saturating} (while for small $q$, in \cite{bonini2022saturating} the rough estimate could be avoided). For $n_1=\cdots=n_t=n$, we have:
		\begin{equation}\label{eq:asyeq}
			N=tn\geq \frac{tm}{\rho}(k-\rho)+\rho - \frac{2t^2}{\rho}.
		\end{equation}
	\end{remark}

	\begin{lemma}
		Let $I \in \{1,\dots,t\}$. Let $n=(n_1,\dots,n_t),s=(s_1,\dots,s_t)=(0,\dots,0,s_I,s_{I+1}\dots,s_t) \in {\mathcal N}$ satisfy the following properties:
		\begin{enumerate}
			\item $n_1 \geq n_2 \geq \cdots \geq n_t$,
			\item $\displaystyle \sum_{j=1}^t s_j = 0$,
			\item $s_I \geq 0$ and $s_j \leq 0, j \in \{I+1,\dots,t\}$.
		\end{enumerate}
		Then $\left\| n \right \| \leq \left\| n +s\right \|$.
	\end{lemma}
	
	\begin{proof}
		We have that
		\begin{eqnarray*}
			\left\| n +s\right \| &=& \left\| n \right \| + \left\| s\right \| 
			+ 2\sum_{i=1}^t\sum_{j=i+1}^t (n_i-n_j)(s_i-s_j). 
		\end{eqnarray*}
		We will show that $\displaystyle S=\sum_{i=1}^t\sum_{j=i+1}^t (n_i-n_j)(s_i-s_j) \geq 0$.
		We have:
		\begin{eqnarray*}
			S & = & \sum_{i=1}^{I-1}\sum_{j=i+1}^t (n_i-n_j)(-s_j) 
			+ \sum_{j=I+1}^t (n_I-n_j)(s_I-s_j) + \sum_{i=I+1}^t\sum_{j=i+1}^t (n_i-n_j)(s_i-s_j).
		\end{eqnarray*}
		The first term on the right-hand-side of the above equation is clearly non-negative, so we consider:
		\begin{eqnarray*} 
			T&=&\sum_{j=I+1}^t (n_I-n_j)(s_I-s_j) + \sum_{i=I+1}^t\sum_{j=i+1}^t (n_i-n_j)(s_i-s_j) 
			\\
			&=&s_I \sum_{j=I+1}^t(n_I-n_j)- \sum_{i=I+1}^t s_i(n_I-n_i)
			+  \sum_{i=I+1}s_i \sum_{j=i+1}^t (n_i-n_j) -\sum_{i=I+1}^t \sum_{j=i+1}^t s_j (n_i-n_j)\\
			&=&s_I \sum_{j=I+1}^t(n_I-n_j)- \sum_{i=I+1}^t s_i(n_I-n_i
			+  \sum_{j=i+1}^t (n_i-n_j)) -\sum_{i=I+1}^t \sum_{j=i+1}^t s_j (n_i-n_j).
		\end{eqnarray*}
		Since $s_I \geq 0, s_j \leq 0$ for $j \geq I+1$, and since the $n_i$ form a non-increasing sequence, we deduce that $T \geq 0$. It follows that $\left\| n +s\right \| - \left\| n\right \| \geq 0$.
	\end{proof}
	
	This means that, for fixed $\rho, t, N$, the left-hand-side of \eqref{eq:lowerbound} takes its minimum and maximum values for $n_1=\cdots=n_t$ (when $t$ divides $N$) and for $n_1=N-t+1,n_2=\cdots=n_t=1$, respectively. That is to say, if $n=(N-t+1,1,\dots,1)$, then for any $n'=(n'_1,\dots,n'_t) \in {\mathcal N}$, we have 
	$n-n' = (N-t+1-n'_1,1-n'_2,\dots,1-n'_t)$. If the $n'_i$ form a non-increasing sequence whose sum is $N$ then we have that $\left\| n\right \| \geq  \left\| n'\right \|$, by the previous lemma. This gives meaning to the following definition.
	
	\begin{definition}
		Let $t$ be a positive integer. We define the \emph{shortest length} of a sum-rank-$\rho$-saturating system $\mU=(\mU_1,\ldots,\mU_t)$ in $\F_{q^m}^k$ to be
		\[
		s_{q^m/q}(k,\rho,t):=\min\left\{\sum_{i=1}^t \dim(\mU_i) : \mU_i \leq_{\Fq} \Fqm^k,(\mU_1,\dots,\mU_t) \text{ is sum-rank-}\rho\text{-saturating} \right\},
		\]
		i.e. it is the minimal sum of the $\F_q$-dimensions of the $\mU_i$, $i\in \{1,\ldots,t\}$.\\ 
		We define the \emph{homogeneous shortest length} of a sum-rank-$\rho$-saturating system $\mU=(\mU_1,\ldots,\mU_t)$ in $\F_{q^m}^k$ to be:
		\[
		s_{q^m/q}^{\rm hom}(k,\rho,t):=\min\left\{tn : \mU_i \leq_{\Fq} \Fqm^k,\dim(\mU_i)=n,(\mU_1,\dots,\mU_t) \text{ is sum-rank-}\rho \text{-saturating} \right\},
		\]
		i.e. it is the minimal sum of the $\F_q$-dimensions of the $\mU_i$, $i\in \{1,\ldots,t\}$, with the additional hypothesis that each $\mU_i$ has the same dimension $n$. 
	\end{definition}
	
	Let $\mathcal{U}=(\mathcal{U}_1,\ldots,\mathcal{U}_t)$ be a sum-rank saturating system with generator matrix $G = [G_1 | \cdots | G_t]$. Consider the system  $\mathcal{U}^\prime=(\mathcal{U}_1,\ldots, \mathcal{U}_{t-2},\mathcal{U}_{t-1}^\prime)$, which has generator matrix $G = [G_1 | \cdots | G_{t-2} | G_{t-1}^\prime]$, where $G_{t-1}^\prime$ is a matrix whose columns are a union of $\F_q$-bases of $ \mathcal{U}_{t-1}$ and $\mathcal{U}_t$. Since $ \mathcal{U}_{t-1}+ \mathcal{U}_t  = \mathcal{U}_{t-1}^\prime $ we have $\dim_{\F_q}(\mathcal{U}_{t-1}^\prime)\le \dim_{\F_q}(\mathcal{U}_{t-1})+\dim_{\F_q}(\mathcal{U}_{t})$, while $\rho(\mathcal{U}^\prime)\le \rho(\mathcal{U})$. 
	
	For this reason we have the following proposition.
	
	\begin{proposition}[Monotonicity in $t$]
		We have that $s_{q^m/q}(k,\rho,t)\le s_{q^m/q}(k,\rho,t+1)$.
	\end{proposition}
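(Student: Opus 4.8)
The plan is to flesh out the merging construction sketched in the paragraph preceding the statement. We may assume the right-hand side is finite (otherwise there is nothing to prove); let $\mU=(\mU_1,\dots,\mU_{t+1})$ be a sum-rank $\rho$-saturating $[\bn,k]_{q^m/q}$ system attaining it, so that $\sum_{i=1}^{t+1}\dim_{\F_q}\mU_i=s_{q^m/q}(k,\rho,t+1)$. Put $\mU'_t:=\mU_t+\mU_{t+1}\le_{\F_q}\Fqm^k$ and $\mU':=(\mU_1,\dots,\mU_{t-1},\mU'_t)$, reordering the $t$ blocks by non-increasing dimension if necessary (a block permutation changes neither the spanning condition nor the set ${\mathbb S}_j$, hence neither the system property nor the saturating parameter). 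Since $\langle\mU_t+\mU_{t+1}\rangle_{\Fqm}=\langle\mU_t\rangle_{\Fqm}+\langle\mU_{t+1}\rangle_{\Fqm}$, the blocks of $\mU'$ still span $\Fqm^k$ over $\Fqm$, so $\mU'$ is a sum-rank system in $\Fqm^k$ with $t$ blocks; taking an $\F_q$-basis of $\mU_t+\mU_{t+1}$ (not merely the union of $\F_q$-bases of $\mU_t$ and $\mU_{t+1}$, which need not be $\F_q$-independent) as the new generator block keeps it nondegenerate; and from $\dim_{\F_q}(\mU_t+\mU_{t+1})\le\dim_{\F_q}\mU_t+\dim_{\F_q}\mU_{t+1}$ we get
\[
\sum_{i=1}^{t-1}\dim_{\F_q}\mU_i+\dim_{\F_q}\mU'_t\ \le\ \sum_{i=1}^{t+1}\dim_{\F_q}\mU_i\ =\ s_{q^m/q}(k,\rho,t+1).
\]

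The heart of the argument is the inequality $\rho(\mU')\le\rho(\mU)$ asserted there, which I would obtain from the inclusion ${\mathbb S}_j(\mU)\subseteq{\mathbb S}_j(\mU')$ for every $j$. Indeed, given $\F_q$-subspaces $\mathcal S_i\le_{\F_q}\mU_i$ ($i\in[t+1]$) with $\sum_{i=1}^{t+1}\dim_{\F_q}\mathcal S_i\le j$, set $\mathcal S'_i:=\mathcal S_i$ for $i<t$ and $\mathcal S'_t:=\mathcal S_t+\mathcal S_{t+1}\le_{\F_q}\mU'_t$; then $\sum_{i=1}^{t}\dim_{\F_q}\mathcal S'_i\le j$, and since $\langle\mathcal S_t+\mathcal S_{t+1}\rangle_{\Fqm}=\langle\mathcal S_t\rangle_{\Fqm}+\langle\mathcal S_{t+1}\rangle_{\Fqm}\supseteq\langle\mathcal S_t\rangle_{\Fqm}\cup\langle\mathcal S_{t+1}\rangle_{\Fqm}$, we obtain $\bigcup_{i=1}^{t}\langle\mathcal S'_i\rangle_{\Fqm}\supseteq\bigcup_{i=1}^{t+1}\langle\mathcal S_i\rangle_{\Fqm}$. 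Taking the union over all admissible tuples gives ${\mathbb S}_j(\mU)\subseteq{\mathbb S}_j(\mU')$; in particular $\Fqm^k={\mathbb S}_\rho(\mU)\subseteq{\mathbb S}_\rho(\mU')$, so by Theorem~\ref{th:tfaeabc} the system $\mU'$ is sum-rank $\rho'$-saturating for some $\rho'\le\rho$.

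Thus $\mU'$ is a $t$-block sum-rank saturating system whose total $\F_q$-dimension is at most $s_{q^m/q}(k,\rho,t+1)$ and whose saturating parameter is $\rho'\le\rho$. If $\rho'=\rho$ this is already $s_{q^m/q}(k,\rho,t)\le s_{q^m/q}(k,\rho,t+1)$; if $\rho'<\rho$ the same conclusion follows once one reads $s_{q^m/q}(k,\rho,t)$ as the shortest length over $t$-block systems whose saturating parameter is at most $\rho$, which is the usual convention for shortest-length functions of covering codes (cf.\ \cite{bonini2022saturating}) and is implicit in the paragraph above, where the proposition is deduced directly from $\rho(\mU')\le\rho(\mU)$. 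I expect this last point --- reconciling $\rho'\le\rho$ with the fact that the definition of a saturating system fixes the parameter to be \emph{exactly} $\rho$ --- to be the only delicate issue; the set inclusion ${\mathbb S}_j(\mU)\subseteq{\mathbb S}_j(\mU')$, on which everything hinges, is entirely routine.
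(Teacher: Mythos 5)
Your proof is correct and follows essentially the same route as the paper, whose justification is the paragraph preceding the proposition: merge two blocks into their sum $\mU_t+\mU_{t+1}$, note that the total $\F_q$-dimension does not increase, and that the saturating parameter does not increase --- you simply supply the detail the paper leaves implicit, namely the inclusion ${\mathbb S}_j(\mU)\subseteq{\mathbb S}_j(\mU')$ via characterization (c) of Theorem~\ref{th:tfaeabc}. Your closing caveat about the case $\rho'<\rho$ is a point the paper glosses over entirely, and your resolution (reading the length function as ranging over systems with saturating parameter at most $\rho$, or equivalently invoking monotonicity of $s_{q^m/q}$ in $\rho$) is the right way to reconcile it.
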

	
	\begin{lemma}\label{lem:notscattered}
		Let $\mathcal{U}=(\mU_1,\dots,\mU_t)$ be a sum-rank-$\rho$-saturating $[\mathbf{n},k]_{q^m/q}$ system. 
		Suppose for some $i \in [t]$, $L_{\mU_i}$ is not scattered.
		Let $\mU'_{i} = \langle u_{i,1},\dots,u_{i,{n_i-1}} \rangle_{\F_q}$ for some 
		$\F_q$-basis $\{u_{i,1},\dots,u_{i,n_i}\}$ of $\mU_i$ such that $u_{i,n_i} \in  \langle \lambda u_{i,1},\dots,\lambda u_{i,{n_i-1}}\rangle_{\F_q}$ for some $\lambda \in \F_{q^m}$. 
		Then $\mU'=(\mU_1,\dots,\mU'_{i},\dots,\mU_t)$ is a sum-rank-$\rho'$-saturating $[{\bf n}',k]_{q^m/q}$ system satisfying $\rho' \leq \rho+1$ and 
		${\bf n}' = (n_1,\dots,n_{i}-1,\dots,n_t)$.
	\end{lemma}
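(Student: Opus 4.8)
The plan is to argue geometrically with the linear sets $L_{\mU_\ell}$, exploiting that the discarded basis vector $u_{i,n_i}$ is $\F_{q^m}$-proportional to a fixed nonzero element of $\mU'_i$, so that deleting it changes the point set of $\mU_i$ only in a controlled way.

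First I would record the structural facts. Since $L_{\mU_i}$ is not scattered it contains a point of weight at least two, so $n_i\ge 2$ and there is indeed an $\F_q$-basis $u_{i,1},\dots,u_{i,n_i}$ of $\mU_i$ as in the statement; write $u_{i,n_i}=\lambda w_0$ with $w_0\in\mU'_i\setminus\{0\}$ and $\lambda\in\F_{q^m}\setminus\F_q$. Because $u_{i,n_i}\in\lambda\mU'_i\subseteq\langle\mU'_i\rangle_{\F_{q^m}}$ we get $\langle\mU'_i\rangle_{\F_{q^m}}=\langle\mU_i\rangle_{\F_{q^m}}$, hence $\langle\mU_1,\dots,\mU'_i,\dots,\mU_t\rangle_{\F_{q^m}}=\F_{q^m}^k$; together with $\dim_{\F_q}\mU'_i=n_i-1$ this shows $\mU'$ is a (nondegenerate) $[{\bf n}',k]_{q^m/q}$ system with ${\bf n}'=(n_1,\dots,n_i-1,\dots,n_t)$, after reordering the entries if necessary.

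The core observation is that $L_{\mU_i}$ is ``almost'' contained in $L_{\mU'_i}$: every point of $L_{\mU_i}$ either lies in $L_{\mU'_i}$ or lies on the line joining the \emph{fixed} point $\langle w_0\rangle_{\F_{q^m}}\in L_{\mU'_i}$ to some point of $L_{\mU'_i}$. Indeed, a nonzero $u\in\mU_i$ can be written $u=u'+a\,u_{i,n_i}$ with $u'\in\mU'_i$, $a\in\F_q$; if $a=0$ or $u'=0$ then $\langle u\rangle_{\F_{q^m}}\in L_{\mU'_i}$, and otherwise $u=u'+a\lambda w_0$ lies in $\langle u'\rangle_{\F_{q^m}}+\langle w_0\rangle_{\F_{q^m}}$ with both endpoints in $L_{\mU'_i}$. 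Now take any $Q\in\PG(k-1,q^m)$. Since $\mU$ is sum-rank $\rho$-saturating there exist $P_1,\dots,P_\rho\in\bigcup_{\ell}L_{\mU_\ell}$ with $Q\in\langle P_1,\dots,P_\rho\rangle_{\F_{q^m}}$. Keep every $P_j$ already lying in $\mS':=\bigl(\bigcup_{\ell\neq i}L_{\mU_\ell}\bigr)\cup L_{\mU'_i}$, and replace every $P_j\in L_{\mU_i}\setminus L_{\mU'_i}$ by a single point $R_j\in L_{\mU'_i}$ together with the common point $\langle w_0\rangle_{\F_{q^m}}$. Since $\langle w_0\rangle_{\F_{q^m}}$ serves all these replacements at once, $Q$ lies in the span of at most $\rho+1$ points of $\mS'$; as this holds for every $Q$, the least integer $\rho'$ for which $\mU'$ is sum-rank $\rho'$-saturating satisfies $\rho'\le\rho+1$.

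I expect the only real subtlety to be the bookkeeping in the last step: one must notice that however many of the $P_j$ fall in $L_{\mU_i}\setminus L_{\mU'_i}$, they can all be handled with the \emph{same} auxiliary point $\langle w_0\rangle_{\F_{q^m}}$, so the total cost is $+1$ rather than one extra point per bad $P_j$ — everything else (the parameters of ${\bf n}'$ and the fact that $\mU'$ is still a system) is immediate from $\langle\mU'_i\rangle_{\F_{q^m}}=\langle\mU_i\rangle_{\F_{q^m}}$.
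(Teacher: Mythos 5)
Your proof is correct, but it takes a genuinely different route from the paper's. The paper disposes of this lemma in one line, as a direct consequence of the rank-metric analogue (Lemma 4.5 of the cited work on saturating systems) and, as noted afterwards, as the special case $|S|=1$ of Lemma \ref{lem:notsumscattered}, whose proof is algebraic: starting from $v=G\lambda^T$ with $\wsrk(\lambda)\le\rho$ (characterisation (b) of Theorem \ref{th:tfaeabc}), one substitutes the relation $u_{i,n_i}=\lambda w_0$, absorbs every occurrence of the deleted column into a single new coefficient $\gamma_{\rho+1}$, and checks that the coefficient matrix of the $i$-th block gains at most one nonzero row, so the sum-rank weight grows by at most one. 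You instead work with characterisation (a), entirely inside $\PG(k-1,q^m)$: every point of $L_{\mU_i}$ is either in $L_{\mU'_i}$ or on the line joining the fixed point $\langle w_0\rangle_{\F_{q^m}}$ to a point of $L_{\mU'_i}$, so any $\rho$ points saturating $Q$ can be traded for at most $\rho+1$ points of the new union. Both arguments hinge on the same observation, which you correctly isolate as the crux: the single auxiliary object ($\langle w_0\rangle_{\F_{q^m}}$ for you, $\gamma_{\rho+1}$ for the paper) is shared by \emph{all} the terms that used the deleted basis vector, so the total cost is $+1$ rather than $+1$ per occurrence. Your version is self-contained and arguably more transparent for this special case; the paper's coordinate version is the one that generalises to Lemma \ref{lem:notsumscattered}, where the replacement vector is spread over $|S|$ blocks and is no longer a single projective point of any one $L_{\mU_\ell}$, so the purely geometric substitution would not go through unchanged there.
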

	
	\begin{proof}
		The statement follows as a direct consequence of \cite[Lemma 4.5]{bonini2022saturating}, which gives that if $L_{\mU_i}$ is scattered then $\mU_i'$ is an $[n_i-1,k_i]_{q^m/q}$ rank-$\rho_i'$-saturating system satisfying $\rho_i' \leq \rho_i+1$.
	\end{proof}
	
	More generally, we have the following.
	
	\begin{lemma}\label{lem:notsumscattered}
		Let $\mathcal{U}=(\mU_1,\dots,\mU_t)$ be a sum-rank-$\rho$-saturating $[\mathbf{n},k]_{q^m/q}$ system. 
		Suppose that for each $i \in [t]$, $\mU_i$ has an $\Fq$-basis $\{u^{(i)}_1,\dots,u^{(i)}_{n_i}\}$ 
		such that
		\[
		u^{(t)}_{n_t} = \lambda \sum_{i\in S} \sum_{\substack{j=1,\\j \neq n_t}}^{n_i} a^{(i)}_j u^{(i)}_j,
		\]
		for some $\lambda \in \F_{q^m}$, $a^{(i)}_j \in \Fq$ and $S \subseteq [t]$.
		Then $\mU'=(\mU_1,\dots,\mU_{t-1},\mU'_t)$ is a sum-rank-$\rho'$-saturating $[{\bf n}',k]_{q^m/q}$ system satisfying $\rho' \leq \rho+|S|$ and ${\bf n}' = (n_1,\dots,n_{t-1},n_t-1)$.
	\end{lemma}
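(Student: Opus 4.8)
The plan is to reduce Lemma~\ref{lem:notsumscattered} to Lemma~\ref{lem:notscattered} (equivalently, to \cite[Lemma 4.5]{bonini2022saturating}) applied $|S|$ times, once for each ``block'' appearing on the right-hand side of the expression for $u^{(t)}_{n_t}$. The key observation is that the displayed relation says precisely that $u^{(t)}_{n_t}$ lies in $\lambda$ times the $\F_q$-span of $\{u^{(i)}_j : i \in S,\ j \neq n_t\}$; that is, after removing the last generator of $\mU_t$, the direction $\langle u^{(t)}_{n_t}\rangle_{\F_{q^m}}$ can still be recovered by taking one nonzero vector from each $\mU_i$ with $i \in S$ (namely $\sum_{j\neq n_t} a^{(i)}_j u^{(i)}_j$, scaled appropriately). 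So geometrically, every point of $PG(k-1,q^m)$ that was $(\rho-1)$-saturated using the point $\langle u^{(t)}_{n_t}\rangle$ is now $(\rho-1+|S|-1)$-saturated, i.e. $\rho'$-saturated with $\rho' \le \rho + |S|-1 \le \rho + |S|$, while all other points remain $(\rho-1)$-saturated.

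First I would fix the notation: let $\mU'_t = \langle u^{(t)}_1,\dots,u^{(t)}_{n_t-1}\rangle_{\F_q}$ and set $w_i := \sum_{j\neq n_t} a^{(i)}_j u^{(i)}_j \in \mU_i$ for $i \in S$, so that $u^{(t)}_{n_t} = \lambda \sum_{i\in S} w_i$. Second, I would take an arbitrary $Q = \langle v\rangle_{\F_{q^m}} \in PG(k-1,q^m)$ and use that $\mU$ is sum-rank $\rho$-saturating: there exist points $P_1,\dots,P_\rho \in L_{\mU_1}\cup\cdots\cup L_{\mU_t}$ with $Q \in \langle P_1,\dots,P_\rho\rangle_{\F_{q^m}}$. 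If none of the $P_\ell$ equals $\langle u^{(t)}_{n_t}\rangle$, then all of them already lie in $L_{\mU_1}\cup\cdots\cup L_{\mU'_t}$ and we are done with $\rho-1$ (hence certainly $\rho'-1$) saturating points. If some $P_\ell = \langle u^{(t)}_{n_t}\rangle$, replace it by the $|S|$ points $\langle w_i\rangle_{\F_{q^m}}$, $i \in S$: since $u^{(t)}_{n_t} = \lambda\sum_{i\in S} w_i \in \langle w_i : i\in S\rangle_{\F_{q^m}}$, the new collection of at most $\rho-1+|S| = \rho+|S|-1$ points still has $Q$ in its span, and every one of these points lies in $L_{\mU_1}\cup\cdots\cup L_{\mU_{t-1}}\cup L_{\mU'_t}$ because $w_i \in \mU_i$ and, crucially, $w_i$ never uses the coordinate $u^{(t)}_{n_t}$ (the constraint $j \neq n_t$), so for $i = t$ we have $w_t \in \mU'_t$. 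Thus $Q$ is $(\rho+|S|-1)$-saturated by $\mU'$, which gives $\rho' \le \rho + |S|$.

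The remaining routine points are: that $\mU'$ is still an $[{\bf n}',k]_{q^m/q}$ system with ${\bf n}' = (n_1,\dots,n_{t-1},n_t-1)$ — this holds because $\langle \mU_1,\dots,\mU_{t-1},\mU'_t\rangle_{\F_{q^m}} \supseteq \langle \mU_1,\dots,\mU_{t-1}\rangle_{\F_{q^m}} + \langle w_i : i\in S\rangle_{\F_{q^m}} \ni u^{(t)}_{n_t}$, so the $\F_{q^m}$-span is unchanged and equals $\F_{q^m}^k$, and $\dim_{\F_q}\mU'_t = n_t-1$ since $\{u^{(t)}_1,\dots,u^{(t)}_{n_t-1}\}$ is $\F_q$-linearly independent; and that $\dim_{\F_q}\mU'_t = n_t - 1 \geq 0$ forces the hypothesis $n_t \geq 1$, which is automatic. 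The main (and essentially only) obstacle is the bookkeeping in the saturation-point replacement argument: one must check that each substituted point genuinely lies in the correct $L_{\mU_i}$ and in particular that $w_t$ avoids the deleted generator, which is exactly what the index restriction $j \neq n_t$ in the statement guarantees. I would also remark that Lemma~\ref{lem:notscattered} is the special case $S = \{i\}$, $|S| = 1$ of this argument (after relabelling), so the proof can legitimately be phrased as ``apply the argument of Lemma~\ref{lem:notscattered} / \cite[Lemma 4.5]{bonini2022saturating} once for each index in $S$,'' accumulating one extra saturating point per application.
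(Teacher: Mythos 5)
Your overall strategy---replacing the point $\langle u^{(t)}_{n_t}\rangle$ by the $|S|$ points $\langle w_i\rangle_{\F_{q^m}}$, $i\in S$, where $w_i=\sum_{j\neq n_t}a^{(i)}_ju^{(i)}_j$---is the right one, and it is the geometric counterpart of the paper's proof, which performs the same substitution in coordinates by appending one extra row $(a^{(\ell)}_r)_r$ to each block matrix $\tau^{(\ell)}$ with $\ell\in S$ and checking that the sum-rank weight grows by at most $|S|$. However, your case analysis has a genuine gap. You split into ``no $P_\ell$ equals $\langle u^{(t)}_{n_t}\rangle$'' versus ``some $P_\ell=\langle u^{(t)}_{n_t}\rangle$'', and in the first case you assert that all the $P_\ell$ already lie in $L_{\mU_1}\cup\cdots\cup L_{\mU'_t}$. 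This is false: the saturating points are arbitrary points of the linear sets, and $L_{\mU_t}$ is not $L_{\mU'_t}\cup\{\langle u^{(t)}_{n_t}\rangle\}$. A point $P_\ell=\langle u'+c\,u^{(t)}_{n_t}\rangle$ with $u'\in\mU'_t\setminus\{0\}$ and $c\in\F_q^*$ is neither equal to $\langle u^{(t)}_{n_t}\rangle$ nor, in general, a point of $L_{\mU'_t}$ or of any other $L_{\mU_i}$, so your dichotomy misses it.

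The gap is repairable without changing the idea. Write each $P_\ell\in L_{\mU_t}$ as $\langle u'_\ell+c_\ell u^{(t)}_{n_t}\rangle$ with $u'_\ell\in\mU'_t$ and $c_\ell\in\F_q$; using $u^{(t)}_{n_t}=\lambda\sum_{i\in S}w_i$, replace each such $P_\ell$ having $c_\ell\neq 0$ by $\langle u'_\ell\rangle$ (discarded if $u'_\ell=0$) and adjoin, \emph{once for all of them}, the $|S|$ points $\langle w_i\rangle$. Then $Q$ remains in the span of the resulting collection, every member of which lies in $L_{\mU_1}\cup\cdots\cup L_{\mU_{t-1}}\cup L_{\mU'_t}$ (the restriction $j\neq n_t$ guaranteeing $w_t\in\mU'_t$, as you note), and the collection has at most $\rho+|S|$ elements precisely because the auxiliary points are shared among all replacements; charging $|S|$ new points to each bad $P_\ell$ separately would only give $\rho+|S|$ when there is a single bad point. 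This sharing is exactly what the paper's computation encodes: the coefficient $\gamma_{\rho+1}=\lambda\sum_i\gamma_i\tau^{(t)}_{i,n_t}$ collects the contributions of \emph{all} replaced points at once, which is why only one extra row per block of $S$, hence only $|S|$ extra units of sum-rank weight, is needed. Finally, your closing remark that one could iterate Lemma~\ref{lem:notscattered} once per index of $S$ does not work as stated, since only one basis vector is deleted in total; the correct relationship is the reverse, namely that Lemma~\ref{lem:notscattered} is the special case $|S|=1$.
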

	\begin{proof}
		Let $v \in \Fqm^k$. As in the proof of Theorem \ref{th:tfaeabc}, there exist $\gamma_1,\dots \gamma_\rho \in \Fqm$ such that
		\[
		v = \sum_{\ell=1}^t \sum_{r=1}^{n_\ell} u_r^{(\ell)} \sum_{i=1}^\rho \gamma_i \tau^{(\ell)}_{i,r}.
		\]
		for some $\tau^{(\ell)}_{i,r} \in \Fq$ with $\tau^{(\ell)}_{i,r}$ nonzero for at most one pair $(\ell,i)$ and at most $\rho$ of the matrices $\tau^{(\ell)}:=(\tau^{(\ell)}_{i,r}) \in \Fq^{\rho \times n_\ell}$ are nonzero.  
		Define $a^{(\ell)}_r:=0$ for all $r\in [n_\ell]$ whenever $\ell \notin S$.
		This yields that
		\begin{eqnarray*}
			v & = &\sum_{\ell=1}^t \sum_{r=1}^{n_\ell} u_r^{(\ell)} \sum_{i=1}^\rho \gamma_i \tau^{(\ell)}_{i,r} 
			\\
			& = & \sum_{\ell=1}^{t-1} \sum_{r=1}^{n_\ell} u_r^{(\ell)} \sum_{i=1}^\rho \gamma_i \tau^{(\ell)}_{i,r}
			+ \sum_{r=1}^{n_t-1} u_r^{(t)} \sum_{i=1}^\rho \gamma_i \tau^{(t)}_{i,r} + 
			\lambda \sum_{\ell=1}^t \sum_{\substack{r=1,\\r \neq n_t}}^{n_\ell} a^{(\ell)}_r u^{(\ell)}_r \sum_{i=1}^\rho \gamma_i \tau^{(t)}_{i,n_t} \\
			& = & \sum_{\ell=1}^{t-1} \sum_{r=1}^{n_\ell} u_r^{(\ell)}\left(\sum_{i=1}^\rho \gamma_i \tau^{(\ell)}_{i,r} +  a^{(\ell)}_r \lambda \sum_{i=1}^\rho \gamma_i \tau^{(t)}_{i,n_t}\right) + 
			\sum_{r=1}^{n_t-1} u_r^{(t)}\left( \sum_{i=1}^\rho \gamma_i \tau^{(t)}_{i,r} +   a^{(t)}_r \lambda \sum_{i=1}^\rho \gamma_i \tau^{(t)}_{i,n_t} \right)
			\\
			& = & \sum_{\ell=1}^{t-1} \sum_{r=1}^{n_\ell} u_r^{(\ell)}\left(\sum_{i=1}^\rho \gamma_i \tau^{(\ell)}_{i,r} +  a^{(\ell)}_r \gamma_{\rho+1}\right) + 
			\sum_{r=1}^{n_t-1} u_r^{(t)}\left( \sum_{i=1}^\rho \gamma_i \tau^{(t)}_{i,r} +   a^{(t)}_r \gamma_{\rho+1} \right)
			\\
			& = & \sum_{\ell=1}^{t-1} \sum_{r=1}^{n_\ell} u_r^{(\ell)}\sum_{i=1}^{\rho+1} \gamma_i \tau^{(\ell)}_{i,r} + 
			\sum_{r=1}^{n_t-1} u_r^{(t)}\sum_{i=1}^{\rho+1} \gamma_i \tau^{(t)}_{i,r},
			\\
			& = &\sum_{\ell=1}^t \sum_{r=1}^{n'_\ell} u_r^{(\ell)} \sum_{i=1}^{\rho+1} \gamma_i \tau^{(\ell)}_{i,r}
		\end{eqnarray*}
		where $n'_t=n_t-1$, $n'_\ell=n_\ell$ if $\ell < t$;
		$\displaystyle \gamma_{\rho+1}=\lambda \sum_{i=1}^\rho \gamma_i \tau^{(t)}_{i,n_t}$, and $\tau^{(\ell)}_{\rho+1,r}=a^{(\ell)}_r$ for each $\ell \in [t]$.
		Then $$\displaystyle v = \sum_{\ell=1}^t G^{(\ell)} \lambda_\ell^T = G \lambda^T,$$ where
		$$G=[G^{(1)}|\cdots| G^{(t)}], G^{(\ell)} = [u^{(\ell)}_1,\dots,u^{(\ell)}_{n'_\ell}],$$ 
		$$(\lambda_1,\dots,\lambda_t) = (\gamma_1,\dots,\gamma_{\rho+1})[\hat{\tau}^{(1)} | \cdots|\hat{\tau}^{(t)}]= [ \gamma\hat{\tau}^{(1)} | \cdots| \gamma\hat{\tau}^{(t)}],$$ and $\hat{\tau}^{(\ell)}=(\tau^{(\ell)}_{i,r}) \in F_q^{(\rho+1) \times n_\ell} $ for each $\ell \in [t]$.
		Now consider the value of 
		$W(\lambda) = \sum_{\ell=1}^t \rk(\lambda_\ell) =  \sum_{\ell=1}^t \mathrm{w}_\rk(\gamma \hat{\tau}^{(\ell)})$.
		Let $\ell \in S$. If $\tau^{(\ell)}$ is not the all-zero matrix, then $\hat{\tau}^{(\ell)}$ has at most $2$ nonzero rows and so $$\mathrm{w}_\rk(\lambda_\ell)=\mathrm{w}_\rk(\gamma \hat{\tau}^{(\ell)}) \leq 2.$$ Otherwise $\lambda_\ell = \gamma_{\rho+1} (\tau^{(\ell)}_{\rho+1,1},\dots,\tau^{(\ell)}_{\rho+1,n_\ell})$ has rank weight at most 1.
		If $\ell \notin S$ then $\mathrm{w}_\rk(\lambda_\ell)\leq 1$.
		It follows that $w(\lambda) \leq \rho + |S|$.
	\end{proof}
	
	In particular, Lemma \ref{lem:notscattered} follows as a special case of Lemma \ref{lem:notsumscattered}.
	
	We have the following observations on the monotonicity of $s_{q^m/q}(k,\rho,t)$. The proofs are similar to those of \cite[Theorem 4.6]{bonini2022saturating}.
	
	\begin{theorem}[Monotonicity in $\rho$]\label{th:monrho}
		Let $|\mathbf{n}|>k$. The following hold.
		\begin{enumerate}
			\item $s_{q^m/q}(k,\rho,t)\le s_{q^m/q}(k,\rho+1,t)$.
			\item $s_{q^m/q}(k,\rho,t)\leq s_{q^m/q}(k+1,\rho,t)-1$.
			\item $s_{q^m/q}(k+1,\rho+1,t)\leq s_{q^m/q}(k,\rho+1,t)+1$.
		\end{enumerate}
	\end{theorem}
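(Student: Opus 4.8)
The plan is to mirror the three arguments of \cite[Theorem 4.6]{bonini2022saturating}, each time producing an explicit new saturating system from a given one and tracking how the total $\Fq$-dimension $\sum_i\dim\mU_i$ and the saturating parameter $\rho$ change. Throughout I will use the characterisation of Theorem \ref{th:tfaeabc}(b): $\mU$ is sum-rank $\rho$-saturating iff $\rho$ is the least integer such that every $v\in\Fqm^k$ can be written as $G\lambda^T$ with $\wsrk(\lambda)\le\rho$.

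For part (1), I would start from an optimal sum-rank $(\rho+1)$-saturating system $\mU=(\mU_1,\dots,\mU_t)$ realising $s_{q^m/q}(k,\rho+1,t)$ and show that, after possibly re-choosing bases, one of the $\mU_i$ can be shrunk by one dimension while the saturating parameter drops to at most $\rho$ — more precisely, I would argue that $\sum_i\dim\mU_i> s_{q^m/q}(k,\rho,t)$ would contradict optimality in the other direction, so it suffices to exhibit \emph{some} sum-rank $\rho'$-saturating system with $\rho'\le\rho$ and total dimension $s_{q^m/q}(k,\rho+1,t)$. The cleanest route is the converse inclusion $\mathbb S_\rho(\mU)\subseteq\mathbb S_{\rho+1}(\mU)$, which is immediate from the definition, together with the observation that if $|\mathbf n|>k$ there is slack to use: a sum-rank $\rho$-saturating system is in particular a $(\rho+1)$-saturating one in the weak sense (every point is $\rho$-saturated, hence $(\rho+1)$-saturated), so any optimal $\rho$-saturating system of total length $s_{q^m/q}(k,\rho,t)$ witnesses $s_{q^m/q}(k,\rho+1,t)\le s_{q^m/q}(k,\rho,t)$ only if we are careful that "$\rho+1$ is the least such integer". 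This minimality subtlety is exactly where the hypothesis $|\mathbf n|>k$ enters, and handling it correctly is the one genuinely delicate point; I would resolve it by padding with a redundant column so that the least saturating parameter is forced up by one, exactly as in \cite{bonini2022saturating}.

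For part (2), take an optimal sum-rank $\rho$-saturating $[\mathbf n,k+1]_{q^m/q}$ system $\mU$ with generator matrix $G=[G_1|\cdots|G_t]$. Deleting one suitably chosen row of $G$ yields a generator matrix of a system in $\Fqm^k$ of the same total length; the projection $\pi:\Fqm^{k+1}\to\Fqm^k$ forgetting that coordinate sends a decomposition $v=G\lambda^T$ to $\pi(v)=(\pi G)\lambda^T$, so the projected system is sum-rank $\rho''$-saturating for some $\rho''\le\rho$, giving $s_{q^m/q}(k,\rho,t)\le s_{q^m/q}(k+1,\rho,t)$. To sharpen this to the claimed $-1$, I would instead delete a row for which one of the blocks $G_i$ loses a pivot, so that the corresponding $\mU_i$ can be truncated by one dimension without losing the spanning/saturating property — again this is where $|\mathbf n|>k$ is used, ensuring at least one block has a column that becomes dependent after deletion, so the total length drops by one as well.

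For part (3), start from an optimal sum-rank $(\rho+1)$-saturating $[\mathbf n,k]_{q^m/q}$ system $\mU$ and build a system in $\Fqm^{k+1}$ by adjoining a new coordinate: replace $G=[G_1|\cdots|G_t]$ by $G'=\begin{bmatrix}G_1&\cdots&G_t& 0\\ 0&\cdots&0&1\end{bmatrix}$, adding a single new column (hence total length increases by exactly $1$) in, say, block $t$ or a fresh treatment of the blocks so that the tuple structure is respected. For $v'=(v,\alpha)\in\Fqm^{k+1}$ write $v=G\lambda^T$ with $\wsrk(\lambda)\le\rho+1$ and append a coordinate $\alpha$ to the last block, contributing rank at most $1$; a short case analysis (if the new coordinate reuses a row already present, no extra rank is incurred) shows the combined $\lambda'$ has $\wsrk(\lambda')\le\rho+1$, but in the borderline cases the rank genuinely goes up, so one only gets $\le\rho+2$ in general — the point is to choose the embedding so that the \emph{least} saturating integer for $G'$ is at most $\rho+1$, which follows because the first $k$ coordinates are already $(\rho+1)$-covered and the last coordinate is handled "for free" by the new unit column spanning its own axis. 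Assembling these three sub-arguments and invoking Lemma \ref{lem:notsumscattered} wherever a block needs to be truncated after a dependency is created completes the proof. The main obstacle throughout is bookkeeping the \emph{minimality} clause in the definition of sum-rank $\rho$-saturating (as opposed to the easy "$\le\rho$" direction), and this is precisely what the assumption $|\mathbf n|>k$ is there to control.
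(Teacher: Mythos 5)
Your outline for part~1 rests on a mechanism that cannot work. You correctly isolate the difficulty --- the minimality clause in the definition of sum-rank $\rho$-saturating --- but you propose to resolve it by ``padding with a redundant column so that the least saturating parameter is forced up by one.'' Enlarging a system never raises its saturating parameter: if $\mU_i \subseteq \mU_i'$ for all $i$, then ${\mathbb S}_{\rho}(\mU) \subseteq {\mathbb S}_{\rho}(\mU')$, so the least $\rho$ with ${\mathbb S}_{\rho}(\mU')=\F_{q^m}^k$ can only decrease or stay the same; and even if it could increase, adding a column raises the total length by one, which defeats the inequality you are after. The paper's argument runs in exactly the opposite direction: starting from an optimal sum-rank $\rho$-saturating system with generator matrix in the standard form \eqref{eq:sfm}, it \emph{deletes} a column that is $\F_{q^m}$-proportional to a combination of the remaining ones, which by Lemma~\ref{lem:notscattered} raises the saturating parameter by at most one, and by minimality of $|\mathbf{n}|$ by exactly one. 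Your opening suggestion for part~1 (shrink an optimal $(\rho+1)$-saturating system so that ``the saturating parameter drops to at most $\rho$'') is backwards for the same reason: removing generators makes saturation harder, not easier.

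What is entirely absent from your proposal is the technical core of the paper's proof of part~1, namely the case in which no deletable rank-weight-one column exists. The paper chooses, over all optimal systems and standard-form generator matrices, one whose distinguished column $y$ (the rightmost column of $A_{1,1}$) has minimal rank weight $\ell$, writes $y=\sum_{i=1}^{\ell}\alpha_i y^{(i)}$ with $y^{(i)}\in\F_q^{k_1}$, and then iteratively replaces $y$ by the partial sums $x^{(j)}=\sum_{i=1}^{\ell-j}\alpha_i y^{(i)}$; each replacement changes the saturating parameter by at most one, and the joint minimality of $\ell$ and of $|\mathbf{n}|$ forces a contradiction unless the first replacement already produces a $(\rho+1)$-saturating system. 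None of this inductive descent appears in your sketch. For parts~2 and~3 the paper defers to the rank-metric case, and your outlines point in a plausible direction (projection for part~2, adjoining one coordinate and one unit column for part~3), but you yourself concede that your part~3 construction only yields $\wsrk(\lambda')\le\rho+2$ in general and leave the repair of that borderline case unspecified; since that is precisely where the work lies, the proposal as written does not establish any of the three items.
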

	
	\begin{proof}
		Let $\mU=(\mU_1,\dots,\mU_t)$ be an $[\mathbf{n},k]_{q^m/q}$ sum-rank-$\rho$-saturating system such that $|\mathbf{n}|=s_{q^m/q}(k,\rho,t)$. 
		Without loss of generality, we may assume that a generator matrix of a code associated with $\mU$ has the form:
		\begin{equation}\label{eq:sfm}
			G=[G_1,\dots,G_t]=
			\left[
			\begin{array}{cc|cc|cc|cc}
				I_{k_1}& A_{1,1}& 0      & A_{1,2} & \cdots & \cdots &0      & A_{1,t}  \\
				0      &   0    & I_{k_2}& A_{2,2} & \cdots & \cdots &0      &A_{2,t}  \\
				0      &   0    & 0      & 0       & \cdots & \cdots &0      &A_{3,t}  \\
				\vdots &\vdots  & \vdots &\vdots   & \vdots & \vdots &\vdots &\vdots\\
				0      &   0    & 0      & 0       & \cdots & \cdots &I_{k_t}& A_{t,t}  
			\end{array}
			\right]
		\end{equation}
		We may assume that $n_1 > k_1 \geq 1$, since otherwise we can permute the submatrices $G_i$ and apply arbitrary elementary row operations to the block-permuted matrix and perform $\Fq$-linear column operations within each $G_i$ to find a matrix of the required form.
		Over all such choices of $\mU$ and $G$, let $G$ be one such that the rightmost column of $A_{1,1}$, is a vector $y \in \Fqm^{k_1}$ that has minimal rank weight.
		If $\mathrm{w}_{\rk}(y)=1$, then by Lemma \ref{lem:notscattered}, there exists an
		$[\mathbf{n'},k]_{q^m/q}$ sum-rank-$\rho'$-saturating system with $|\mathbf{n'}|=|\mathbf{n}|-1$ and $\rho'\leq\rho+1$; indeed we then have $\rho'=\rho+1$ by the minimality of $|\mathbf{n}|$. 
		In this case we have $s_{q^m/q}(k,\rho+1,t) \leq s_{q^m/q}(k,\rho,t)-1$.
		
		If $\mathrm{w}_{\rk}(y)=\ell\geq 2$, 
		then $y=\sum_{i=1}^\ell\alpha_i y^{(i)}$ for an $\Fq$-basis $\{\alpha_1,\dots,\alpha_\ell\} \subseteq \Fqm$ of the $\Fq$-span of the coefficients of $y$ and $y^{(i)} \in \Fq^{k_1}$. 
		A straightforward computation verifies that the matrix $G^{(1)}$ found by replacing $y$ in $A_{1,1}$ with $x^{(1)}:=\sum_{i=1}^{\ell-1}\alpha_i y^{(i)}$
		yields an $[\mathbf{n},k]_{q^m/q}$ sum-rank-$\rho^{(1)}$-saturating system with $\rho^{(1)}\leq\rho+1$. If $\rho^{(1)}=\rho+1$ then the statement of the theorem holds, so suppose otherwise. If $\rho^{(1)}=\rho$ then we arrive at a contradiction by the minimality of $\mathrm{w}_{\rk}(y) > \mathrm{w}_{\rk}(x^{(1)})$, so suppose $\rho^{(1)} \leq \rho -1$. 
		The matrix $G^{(2)}$ found by replacing $y$ with $x^{(2)} :=\sum_{i=1}^{\ell-2}\alpha_i y^{(i)}$ yields an $[\mathbf{n},k]_{q^m/q}$ sum-rank-$\rho^{(2)}$-saturating system with $\rho^{(2)}\leq\rho^{(1)}+1 \leq \rho$. As before, by the minimality of $\mathrm{w}_{\rk}(y) > \mathrm{w}_{\rk}(x^{(2)})$, we have $\rho^{(2)} \neq \rho$ and so $\rho^{(2)}\leq \rho-1$.
		Repeated applications of the above argument lead to a sequence of 
		$[\mathbf{n},k]_{q^m/q}$ sum-rank-$\rho^{(i)}$-saturating systems with $\rho^{(i)}\leq \rho-1$ for each $i \in [\ell-1]$. The final matrix $G^{(\ell-1)}$
		in this sequence has rightmost column of $A_{1,1}$ equal to $x^{(\ell-1)} = \alpha_1 y^{(1)}$, so we may apply Lemma \ref{lem:notscattered} to see that 
		deleting this column from $G^{(\ell-1)}$ results in an $[\mathbf{n'},k]_{q^m/q}$ sum-rank-$\rho$-saturating system with $|\mathbf{n'}|=|\mathbf{n}|-1$, giving a contradiction. 
		We deduce that $\rho^{(1)}=\rho+1$, in which we have $s_{q^m/q}(k,\rho+1,t) \leq s_{q^m/q}(k,\rho,t)$.  
		This proves 1.

		The proofs that 2 and 3 hold are very similar to the rank-metric case and are omitted (see \cite[Theorem 4.6]{bonini2022saturating}).
	\end{proof}

	\begin{definition}
		For each $i\in \{1,2\}$, let $\mU^{(i)}$ be an 
		$[{\bf n^{(i)}},k_i]_{q^m/q}$ system, associated with an 
		$[{\bf n^{(i)}},k_i]_{q^m/q}$ sum-rank metric code $\mC_i$.
		Let $f:\F_{q^m}^{{\bf n^{(1)}}} \longrightarrow \F_{q^m}^{{\bf n^{(2)}}}$ be an $\F_{q^m}$-linear map.
		The code 
		\[\mC:=\{(u,f(u)+v) : u \in \C_1, v \in \mC_2\}\] 
		is an $[({{\bf n^{(1)}}},{{\bf n^{(2)}}}),k_1+k_2]_{q^m/q}$ code, which we call the $f$-sum of $\mC_1$ and $\mC_2$. The
		$[({{\bf n^{(1)}}},{{\bf n^{(2)}}}),k_1+k_2]_{q^m/q}$ system associated with $\mC$ is called the $f$-sum of $\mU^{(1)}$ and $\mU^{(2)}$, which we denote by 
		$\mU^{(1)} \oplus_{f} \mU^{(2)}$.    
		If $f$ is the zero map, we write $\mU^{(1)} \oplus \mU^{(2)}$, and call it the direct sum; if $f$ is the identity map, we write $\mU^{(1)} \oplus_\iota \mU^{(2)}$ and call it the Plotkin-sum of $\mU^{(1)}$ and $\mU^{(2)}$.
	\end{definition}
	
	\begin{proposition}
		\label{prop:DirectSum}
		For each $i\in \{1,2\}$, let ${\bf n}^{(i)} = ({\bf n}^{(i)}_1,\dots,{\bf n}^{(i)}_{t_i})$, and let $\mU^{(i)}$ be an 
		$[{\bf n^{(i)}},k_i]_{q^m/q}$ 
		sum-rank-$\rho_i$-saturating system, associated with an $[{\bf n^{(i)}},k_i]_{q^m/q}$ code $\mC_i$.
		Let $f:\F_{q^m}^{{\bf n^{(1)}}} \longrightarrow \F_{q^m}^{{\bf n^{(2)}}}$ be an $\F_{q^m}$-linear map.
		Then $\mU^{(1)} \oplus_{f} \mU^{(2)}$ is an 
		$[({{\bf n^{(1)}}},{{\bf n^{(2)}}}),k_1+k_2]_{q^m/q}$ system 
		that is sum-rank-$\rho$-saturating, where 
		$\rho\leq \rho_1+\rho_2.$
		In particular, if $\rho_1+\rho_2\leq \min\{k_1+k_2,m\}$, then
		\[s_{q^m/q}(k_1+k_2,\rho_1+\rho_2,t_1+t_2) \leq s_{q^m/q}(k_1,\rho_1,t_1)+s_{q^m/q}(k_2,\rho_2,t_2).\]
	\end{proposition}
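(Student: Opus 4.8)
The plan is to reduce the claim to the characterisation of sum-rank saturating systems in Theorem~\ref{th:tfaeabc} and then to assemble, for an arbitrary vector, a short sum-rank expression out of the ones provided by $\mathcal U^{(1)}$ and $\mathcal U^{(2)}$ separately. Fix generator matrices $G_1,G_2$ of $\mathcal C_1,\mathcal C_2$ and let $F$ be the matrix of the $\F_{q^m}$-linear map $f$, so that the $f$-sum code $\mathcal C$ has generator matrix
\[
G=\begin{bmatrix} G_1 & G_1F\\ 0 & G_2\end{bmatrix},
\]
block-partitioned compatibly with $({\bf n}^{(1)},{\bf n}^{(2)})$: the first $t_1$ blocks are the column-blocks of $G_1$ bordered below by zeros, and the last $t_2$ blocks are the column-blocks of $G_1F$ stacked over those of $G_2$. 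A routine check shows that $G$ has full row rank $k_1+k_2$ and that its columns, restricted to each block, are $\F_q$-linearly independent, so $\mathcal U^{(1)}\oplus_f\mathcal U^{(2)}$ is a nondegenerate $[({\bf n}^{(1)},{\bf n}^{(2)}),k_1+k_2]_{q^m/q}$ system. By the equivalence (a)$\iff$(b) of Theorem~\ref{th:tfaeabc}, it suffices to show that every $v=(v_1,v_2)\in\F_{q^m}^{k_1}\times\F_{q^m}^{k_2}$ can be written as $v=G\lambda^T$ with ${\rm wt}_{\srk}(\lambda)\le\rho_1+\rho_2$; the saturating parameter $\rho$ of $\mathcal U^{(1)}\oplus_f\mathcal U^{(2)}$ is then the least integer for which such a $\lambda$ always exists, so a construction of this kind gives $\rho\le\rho_1+\rho_2$.

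To build $\lambda$ I would proceed in two steps. Since $\mathcal U^{(2)}$ is sum-rank $\rho_2$-saturating, Theorem~\ref{th:tfaeabc}(b) applied to $G_2$ and $v_2$ yields a tuple $\nu$ with ${\rm wt}_{\srk}(\nu)\le\rho_2$ and $G_2\nu^T=v_2$. Fixing this $\nu$, and using that $\mathcal U^{(1)}$ is sum-rank $\rho_1$-saturating, Theorem~\ref{th:tfaeabc}(b) applied to $G_1$ and the corrected vector $v_1-G_1F\nu^T\in\F_{q^m}^{k_1}$ yields a tuple $\mu$ with ${\rm wt}_{\srk}(\mu)\le\rho_1$ and $G_1\mu^T=v_1-G_1F\nu^T$. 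Let $\lambda:=(\mu,\nu)$ be the tuple whose first $t_1$ blocks are those of $\mu$ and whose last $t_2$ blocks are those of $\nu$. Then the top $k_1$ coordinates of $G\lambda^T$ are $G_1\mu^T+G_1F\nu^T=(v_1-G_1F\nu^T)+G_1F\nu^T=v_1$ and the bottom $k_2$ coordinates are $G_2\nu^T=v_2$, so $G\lambda^T=v$; moreover ${\rm wt}_{\srk}(\lambda)={\rm wt}_{\srk}(\mu)+{\rm wt}_{\srk}(\nu)\le\rho_1+\rho_2$ since the sum-rank weight of a concatenation of tuples is the sum of the two sum-rank weights. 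This proves the first assertion.

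For the ``in particular'' statement, specialise to $f=0$ so that $\mathcal U^{(1)}\oplus\mathcal U^{(2)}$ is the direct sum, and choose $\mathcal U^{(1)},\mathcal U^{(2)}$ attaining $s_{q^m/q}(k_1,\rho_1,t_1)$ and $s_{q^m/q}(k_2,\rho_2,t_2)$ respectively. By the first part, $\mathcal U^{(1)}\oplus\mathcal U^{(2)}$ is a sum-rank $\rho$-saturating $[({\bf n}^{(1)},{\bf n}^{(2)}),k_1+k_2]_{q^m/q}$ system with $t_1+t_2$ blocks whose $\F_q$-dimensions sum to $s_{q^m/q}(k_1,\rho_1,t_1)+s_{q^m/q}(k_2,\rho_2,t_2)$, with $\rho\le\rho_1+\rho_2$; hence $s_{q^m/q}(k_1+k_2,\rho,t_1+t_2)\le s_{q^m/q}(k_1,\rho_1,t_1)+s_{q^m/q}(k_2,\rho_2,t_2)$. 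It then remains to pass from $\rho$ to $\rho_1+\rho_2$, which is exactly the monotonicity of $s_{q^m/q}$ in the covering radius established in Theorem~\ref{th:monrho}; the hypothesis $\rho_1+\rho_2\le\min\{k_1+k_2,m\}$ is what guarantees that $s_{q^m/q}(k_1+k_2,\rho_1+\rho_2,t_1+t_2)$ is a well-defined finite quantity and that Theorem~\ref{th:monrho} applies in the intermediate steps. This last reduction is the only delicate point; everything else is the block-matrix bookkeeping above, which is a direct generalisation of the rank-metric direct-sum argument of \cite{bonini2022saturating}.
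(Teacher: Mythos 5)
Your proof is correct and follows essentially the same route as the paper's: both cover the second component using the $\rho_2$-saturating property of $\mathcal{U}^{(2)}$ and then handle the first component (after the $f$-correction) using the $\rho_1$-saturating property of $\mathcal{U}^{(1)}$, the only cosmetic difference being that you work with characterization (b) of Theorem~\ref{th:tfaeabc} and an explicit block generator matrix, while the paper works with the sets ${\mathbb S}_{\rho}$ of characterization (c) and an auxiliary system $\mathcal{U}_1+\mathcal{U}'$. The reduction of the ``in particular'' claim to optimal choices of $\mathcal{U}^{(i)}$ plus the monotonicity of Theorem~\ref{th:monrho} is likewise identical.
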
	
	
	\begin{proof}
		Let $\mU'$ be the $[{\bf n}^{(2)},k_1]_{q^m/q}$ system associated with $f(\mU)$. Then 
		$\mU^{(1)}+\mU'$ is a sum-rank-$\rho'$-saturating $[({\bf n}^{(1)},{\bf n}^{(2)}),k_1]_{q^m/q}$ system, satisfying $\rho' \leq \rho_1$. Therefore,
		\[
		\F_{q^m}^{k_1+k_2} = {\mathbb S}_{\rho'}((\mU^{(1)}+\mU')\oplus {\bf 0}_{k_2}) \cup 
		{\mathbb S}_{\rho_2}( {\bf 0}_{k_1}\oplus \mU_2) = {\mathbb S}_{\rho'+\rho_2}(\mU^{(1)} \oplus_f \mU_2),
		\]
		and so $\mU^{(1)} \oplus_f \mU_2$ is an $[({\bf n}^{(1)},{\bf n}^{(2)}),k_1+k_2]_{q^m/q}$ system that is sum-rank-$\rho$-saturating for $\rho \leq \rho_1+\rho_2$.
		The rest now follows by choosing each $\mU^{(i)}$ to have length $s_{q^m/q}(k_i,\rho_i,t_i)$ and applying Theorem \ref{th:monrho}. 
	\end{proof}
	
	\begin{theorem}
		Let $\F_{q^m}=\F_q[\alpha]$, $r\geq 1$, $h\geq r$ and
		\[A_{h,r}:=\left[ \begin{array}{c|c|c|c|c}
			I_{r} & \mathbf{0}   & \mathbf{0}     & \cdots     & \mathbf{0}\\
			\hline
			\mathbf{0} & I_{h-r} & \alpha I_{h-r} & \cdots & \alpha^{m-1}I_{h-r} \end{array}\right]\]
		Then
		\[G_t:=\underbrace{\left[ \begin{array}{c|c|c|c}
				A_{h,r} & \mathbf{0}   &  \cdots     & \mathbf{0}\\
				\hline
				\mathbf{0} & A_{h,r} &  \cdots     &  \mathbf{0}\\
				\hline
				\vdots & \vdots &  \ddots     &  \vdots \\
				\hline
				\mathbf{0}  & \mathbf{0}   &  \cdots     & A_{h,r}
			\end{array}\right]}_{t\text{ times}}\]
		generates a homogeneous sum-rank-$rt$-saturating system. So
		\[s_{q^m/q}^{\rm hom}(th,tr,t)\leq t(m(h-r)+r).\]
	\end{theorem}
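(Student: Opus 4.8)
The plan is to show that the system $\mathcal{U}$ associated with $G_t$ is sum-rank-$rt$-saturating; the length bound $s_{q^m/q}^{\rm hom}(th,tr,t)\le t(m(h-r)+r)$ then follows by a dimension count. First I would write $G_t=[G^{(1)}\mid\cdots\mid G^{(t)}]$, where $G^{(i)}$ is the $th\times n$ matrix having $A_{h,r}$ in rows $(i-1)h+1,\dots,ih$ and zeros elsewhere, with $n:=r+m(h-r)$. Since $\F_{q^m}=\F_q[\alpha]$, the set $\{1,\alpha,\dots,\alpha^{m-1}\}$ is an $\F_q$-basis of $\F_{q^m}$, and from this it is immediate that the $n$ columns of $A_{h,r}$ are $\F_q$-linearly independent; hence $\mathcal{U}=(\mathcal{U}_1,\dots,\mathcal{U}_t)$, with $\mathcal{U}_i$ the $\F_q$-span of the columns of $G^{(i)}$, is a homogeneous $[\mathbf{n},th]_{q^m/q}$ system with $\dim_{\F_q}\mathcal{U}_i=n$ for every $i$. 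By the characterisation in Theorem~\ref{th:tfaeabc}(b), it suffices to prove that every $v=(v^{(1)},\dots,v^{(t)})\in\F_{q^m}^{th}$ can be written $v=G_t\lambda^T$ with $\wsrk(\lambda)\le rt$, and that $rt$ is optimal. As $G_t$ is block diagonal, $v=G_t\lambda^T$ is equivalent to $A_{h,r}\lambda_i^T=v^{(i)}$ for each $i$, with $\wsrk(\lambda)=\sum_{i=1}^t\rk(\lambda_i)$, so everything reduces to the single-block claim: \emph{every $v'\in\F_{q^m}^h$ equals $A_{h,r}(\lambda')^T$ for some row vector $\lambda'$ over $\F_{q^m}$ with $\rk(\lambda')\le r$}.

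For the single-block claim I would parametrise $\lambda'=(\mu_1,\dots,\mu_r,\nu_{1,0},\dots,\nu_{h-r,m-1})$, so that $A_{h,r}(\lambda')^T=\big(\mu_1,\dots,\mu_r,\sum_{j=0}^{m-1}\nu_{1,j}\alpha^j,\dots,\sum_{j=0}^{m-1}\nu_{h-r,j}\alpha^j\big)$; thus $A_{h,r}(\lambda')^T=v'$ forces $\mu_s=v'_s$ for $s\le r$ and leaves the equations $\sum_{j=0}^{m-1}\nu_{\ell,j}\alpha^j=v'_{r+\ell}$, $\ell=1,\dots,h-r$, to be solved in $\F_{q^m}$. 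The key idea is to solve these while keeping every coordinate of $\lambda'$ inside $W:=\langle v'_1,\dots,v'_r\rangle_{\F_q}$. If $W\ne\{0\}$, fix $0\ne b\in W$, expand $b^{-1}v'_{r+\ell}=\sum_{j=0}^{m-1}c_{\ell,j}\alpha^j$ with $c_{\ell,j}\in\F_q$, and set $\nu_{\ell,j}:=c_{\ell,j}b\in\F_q b\subseteq W$; then all entries of $\lambda'$ lie in $W$, so $\rk(\lambda')\le\dim_{\F_q}W\le r$. If $W=\{0\}$, then $v'_1=\dots=v'_r=0$, and taking $\mu_s=0$ together with $\nu_{\ell,j}\in\F_q$ the ordinary $\F_q$-coordinates of $v'_{r+\ell}$ gives $\rk(\lambda')\le 1\le r$ (here $r\ge 1$ is used). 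This shows that $\mathcal{U}$ is sum-rank-$\rho$-saturating for some $\rho\le rt$.

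To pin $\rho=rt$, I would pick $v$ with each block $v^{(i)}$ having $\F_q$-linearly independent first $r$ coordinates and zeros elsewhere (possible since $r\le m$ — the range in which the statement is meaningful, the sum-rank covering radius of any code with these block lengths being at most $t\min(h,m)$): in any representation $A_{h,r}\lambda_i^T=v^{(i)}$, the first $r$ entries of $\lambda_i$ must equal those of $v^{(i)}$, so $\rk(\lambda_i)\ge r$ and $\wsrk(\lambda)\ge rt$. Hence $\mathcal{U}$ is sum-rank-$rt$-saturating, and since $\sum_{i=1}^t\dim_{\F_q}\mathcal{U}_i=tn=t(m(h-r)+r)$, the bound $s_{q^m/q}^{\rm hom}(th,tr,t)\le t(m(h-r)+r)$ follows.

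The block-diagonal reduction and the sharpness argument are routine; the load-bearing step is the ``routing through $W$'' idea of the second paragraph. Solving $\sum_{j}\nu_{\ell,j}\alpha^j=v'_{r+\ell}$ naively with $\nu_{\ell,j}\in\F_q$ only yields $\rk(\lambda')\le r+1$, and this extra unit per block would raise the saturation parameter all the way to $rt+t$. Removing it relies on the elementary but crucial fact that a nonzero $\F_q$-subspace $W$ of $\F_{q^m}$ already spans $\F_{q^m}$ over $\F_{q^m}$ — equivalently, that multiplication by any $0\ne b\in W$ is a bijection of $\F_{q^m}$ — so the $h-r$ ``$\alpha$-coordinates'' of each block can be absorbed into $W$ at no cost in rank. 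Getting this absorption stated cleanly is the part I expect to need the most care.
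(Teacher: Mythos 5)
Your proof is correct, and its skeleton (block-diagonal reduction, saturation of a single block $A_{h,r}$, then a sharpness argument) matches the paper's. The difference is in how the single-block step is handled: the paper simply cites \cite[Theorem 4.4]{bonini2022saturating} for the fact that $A_{h,r}$ generates an $r$-rank-saturating $[m(h-r)+r,h]_{q^m/q}$ system and then invokes Proposition~\ref{prop:DirectSum}, whereas you re-derive that fact from scratch via the ``routing through $W$'' argument, solving $\sum_{j}\nu_{\ell,j}\alpha^j=v'_{r+\ell}$ with all coefficients in $\F_q b$ for a fixed nonzero $b$ in the $\F_q$-span of the forced coordinates $v'_1,\dots,v'_r$ (and handling $W=\{0\}$ separately). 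This makes your write-up self-contained where the paper's is not, at the cost of length; the absorption step you single out as load-bearing is indeed the whole content of the cited rank-metric result. Two further points are in your favour. First, your sharpness argument requires the first $r$ coordinates of each $v^{(i)}$ to be $\F_q$-linearly independent, which is what actually forces $\rk(\lambda_i)\geq r$; the paper only asks that they be non-zero, which does not by itself bound the $\F_q$-dimension of their span from below. Second, you correctly flag that such a choice needs $r\leq m$, a hypothesis the theorem omits: for $r>m$ every $\lambda_i$ has rank at most $m$, so the system is $\rho$-saturating for some $\rho<rt$ and the ``$rt$-saturating'' claim fails as stated, even though the length bound $s_{q^m/q}^{\rm hom}(th,tr,t)\leq t(m(h-r)+r)$ still holds.
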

	
	\begin{proof}
		From \cite[Theorem 4.4]{bonini2022saturating}, we have that 
		$A_{h,r}$ is the generator matrix of a code associated with an $r$-rank saturating $[m(h-r)+r,h]_{q^m/q}$ system $\mU_{h,r}$.
		The matrix $G_t$ is the generator matrix of a code
		associated with the direct sum of $t$ copies of $\mU_{h,r}$, which from Proposition \ref{prop:DirectSum} is a sum-rank-$\rho$-saturating
		$[t(m(h-r)+r),th]_{q^m/q}$ system, with $\rho \leq tr$.
		It is not hard to see that $\rho=tr$. Let $v = (v^{(1)},\dots,v^{(t)}) \in \F_{q^m}^{th}$ such that each $v^{(i)} \in \F_{q^m}^h$ has its first $r$ coefficients nonzero. Then any expression of $v$ as an $\F_{q^m}$-linear combination of the columns of $G_t$ requires the use of all its $tr$ columns. 
	\end{proof}
	
	\begin{remark}
		Since \[t\left(\frac{m}{r}(h-r)+r\right)\leq s_{q^m/q}^{\rm hom}(th,tr,t)\leq t(m(h-r)+r),\]
		we see immediately that when $r=1$ the lower and the upper bounds coincide, so that 
		\[s_{q^m/q}^{\rm hom}(th,t,t)=t(m(h-1)+1).\]
	\end{remark}

	\section{Constructions of sum-rank saturating systems}\label{sec:constructions}
	
	In this final section we present some constructions of sum-rank-$\rho$-saturating systems of small $\F_q$-dimension.
	
	\subsection{Sum-rank saturating systems from partitions of the projective space}
	
	We construct sum-rank saturating systems from partitions of the projective space. First observe that if $\mU=(\mU_1,\ldots,\mU_t)$ is such that $L_{\mU_1}\cup\cdots\cup L_{\mU_t}=\PG(k-1,q^m)$, than $\mU$ is sum-rank-$1$-saturating.

	\begin{example}
		In \cite[Theorem 4.28]{hirschfeld1998projective} we get that, if $(m,k)=1$, there exists a partition of $\PG(k-1,q^m)$ into
		\[t=\frac{(q^{mk}-1)(q-1)}{(q^m-1)(q^k-1)}\]
		subgeometries $\PG(k-1,q)$. This gives us a sum-rank $1$-saturating system of total length
		\[k\cdot \frac{(q^{mk}-1)(q-1)}{(q^m-1)(q^k-1)}.
		\]
	\end{example}
	
	We mention another construction of a sum-rank-$\rho$-saturating system based on a partition of $\PG(k,q^m)$ into subgeometries.
	
	\begin{proposition}\label{prop:partition}
		Let $\mathcal{P}=\{\mathcal{P}_i\}_{i\in \{1,\ldots,t\}}$ a partition of $\PG(k-1,q^m)$ into subspaces. Let $k_i$ be a positive integer such that $\mathcal{P}_i\simeq \PG(k_i-1,q^m)$. If $\mU$ is such that each $\mU_i$ is rank-$\rho$-saturating in $\mathcal{P}_i$, then $\mU$ is sum-rank-$\rho'$-saturating with $\rho'\leq \rho$.
	\end{proposition}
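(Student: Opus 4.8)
The plan is to show that every point $Q=\langle v\rangle_{\F_{q^m}}$ of $\PG(k-1,q^m)$ is $(\rho'-1)$-saturated by $L_{\mU_1}\cup\cdots\cup L_{\mU_t}$ for some $\rho'\le\rho$, i.e. that $v$ lies in the $\F_{q^m}$-span of at most $\rho$ points taken from the union of the linear sets $L_{\mU_i}$. The key observation is that since $\mP=\{\mP_i\}$ is a partition of $\PG(k-1,q^m)$ into subspaces $\mP_i\simeq\PG(k_i-1,q^m)$, the point $Q$ lies in exactly one member $\mP_i$ of the partition. By hypothesis $\mU_i$ is rank-$\rho$-saturating \emph{inside} $\mP_i$ (meaning $\mU_i$ is an $\F_q$-subspace of a $k_i$-dimensional $\F_{q^m}$-space whose ambient projective space we identify with $\mP_i$), so by the definition of rank-$\rho$-saturating there exist $\rho$ points $P_1,\dots,P_\rho\in L_{\mU_i}$ with $Q\in\langle P_1,\dots,P_\rho\rangle_{\F_{q^m}}$. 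Since $L_{\mU_i}\subseteq L_{\mU_1}\cup\cdots\cup L_{\mU_t}$, the same $\rho$ points witness that $Q$ is $(\rho-1)$-saturated by the full union. As $Q$ was arbitrary, $L_{\mU_1}\cup\cdots\cup L_{\mU_t}$ is $(\rho-1)$-saturating for \emph{some} value, hence $\mU$ is sum-rank $\rho'$-saturating with $\rho'\le\rho$ by the definition of a sum-rank saturating system (the actual saturating value being the least integer with the covering property, which can only be smaller).

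The one subtlety to address carefully is the meaning of ``rank $\rho$-saturating in $\mathcal P_i$'': one must fix, for each $i$, an identification of $\mP_i$ with $\PG(k_i-1,q^m)=\PG(W_i,\F_{q^m})$ for some $k_i$-dimensional $\F_{q^m}$-subspace $W_i\subseteq\F_{q^m}^k$, and take $\mU_i$ to be an $\F_q$-subspace of $W_i$ so that $L_{\mU_i}\subseteq\mP_i$ in the ambient $\PG(k-1,q^m)$; then the hypothesis is literally that $L_{\mU_i}$ is a $(\rho-1)$-saturating set of the subgeometry $\mP_i$. I would spell this out in a sentence before the main argument so that the containment $L_{\mU_i}\subseteq L_{\mU_1}\cup\cdots\cup L_{\mU_t}$ and the spanning statement $Q\in\langle P_1,\dots,P_\rho\rangle_{\F_{q^m}}$ both make sense inside the big projective space. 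One should also note why $\langle\mU_1,\dots,\mU_t\rangle_{\F_{q^m}}=\F_{q^m}^k$, so that $\mU=(\mU_1,\dots,\mU_t)$ is genuinely a system: this holds because the $\mP_i$ partition $\PG(k-1,q^m)$, so in particular the union of the $W_i$ spans $\F_{q^m}^k$, and each $\mU_i$ spans $W_i$ over $\F_{q^m}$ (being a rank-$\rho$-saturating set of $\mP_i$, it cannot lie in a proper subspace).

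There is essentially no obstacle here — the result is a direct unpacking of definitions once the identifications are set up. If anything, the only mild point of care is to not over-claim: the statement correctly says $\rho'\le\rho$ rather than $\rho'=\rho$, because passing to the union of linear sets can only make saturation easier, and because the partition pieces may overlap in their spans in ways that reduce the needed number of points. So I would keep the conclusion as an inequality and not attempt to pin down $\rho'$ exactly.
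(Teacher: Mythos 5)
Your argument is correct and is essentially identical to the paper's own proof: locate the given point in its unique partition member $\mathcal{P}_i$, apply the rank-$\rho$-saturating property of $\mU_i$ there, and observe that the witnessing points lie in the union $L_{\mU_1}\cup\cdots\cup L_{\mU_t}$. The extra care you take in fixing the identification of $\mathcal{P}_i$ with $\PG(k_i-1,q^m)$ and in checking that $\mU$ is genuinely a system is a reasonable addition that the paper leaves implicit.
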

	
	\begin{proof}
		Let $P \in \PG(k-1,q^m)$. Then $P \in \mathcal{P}_i$ for some $i$ and $\mU_i$ is a rank-$\rho$-saturating system. In particular, $P$ is in the span of at most $\rho$ elements of $L_{\mU_i}$.
		Therefore, $\mU$ is sum-rank-$\rho'$-saturating with $\rho'\leq \rho$.
	\end{proof}
	
	A partition of the vector space $\Fqm^k$ yields a partition of $\PG(k-1,q^m)$ into subspaces. In \cite{bu1980partitions}, some necessary conditions and constructions of partitions are presented. Thanks to Proposition \ref{prop:partition}, every such partition may be combined with other constructions.

	\subsection{Sum-rank-$(k-1)$-saturating systems from cutting designs}
	
	In this section, we introduce the concept of sum-rank metric minimal codes and we examine their parameters. The geometry of minimal codes has been significant in constructing and establishing bounds in both the Hamming and rank metric, through the so-called \emph{strong blocking sets}. These sets, first introduced in \cite{davydov2011linear} in order to get small saturating sets, are collections of points in the projective space such that the intersection with every hyperplane spans the hyperplane. In \cite{fancsali2014lines}, strong blocking sets are referred to as generator sets and are formed by unions of disjoint lines. They have recently garnered renewed interest in coding theory, especially since \cite{bonini2021minimal}, where they are called \emph{cutting blocking sets} and are utilized to construct minimal codes. Quite surprisingly, they have been demonstrated to be the geometric counterparts of minimal codes \cite{alfarano2019geometric,tang2021full}.
	
	Minimal codes and their geometric counterparts may be introduced also in the context of the sum-rank metric.

	\begin{definition}
		Let $\C$ be an $[\mathbf{n},k]_{q^m/q}$ sum-rank metric code. A nonzero codeword $c \in \C$ is called \emph{minimal} if for every $c'\in \C$ such that $\supp_{\mathbf{n}}(c')\subseteq \supp_{\mathbf{n}}(c)$ then $c'=\lambda c$ for some $\lambda \in \F_{q^m}$. We say that $\C$ is \emph{minimal} if all of its nonzero codewords are minimal. 
	\end{definition}
	
	\begin{definition}
		A system $\mU=(\mU_1,\ldots,\mU_t)\subset \F_{q^m}^k$ is called \emph{cutting} if $L_{\mU_1}\cup \ldots \cup L_{\mU_t}$ is a \emph{strong blocking set} in $\PG(k-1,q^m)$, that is, if
		\[\langle(L_{\mU_1}\cup \ldots \cup L_{\mU_t})\cap \mH\rangle_{\F_{q^m}} = \mH,\]
		for every hyperplane $\mH$ in $\PG(k-1,q^m)$.
	\end{definition}
	
	The following is a generalization of the geometric characterization of minimal codes in the Hamming and in the rank metric.
	
	\begin{theorem}[\!\!{\cite[Corollary 10.25]{santonastaso2022subspace}}]\label{th:correspondence} 
		A sum-rank metric code is minimal if and only if an associated system is cutting.
	\end{theorem}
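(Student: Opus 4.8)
The plan is to reduce both the minimality of $\C$ and the cutting property of $\mU$ to a single condition on the hyperplanes of $\F_{q^m}^k$, exactly as in the Hamming- and rank-metric cases. Fix a generator matrix $G=[G_1\mid\cdots\mid G_t]$ of $\C$ and let $\mU=(\mU_1,\dots,\mU_t)$ be the associated system, with $\mU_i\leq_{\Fq}\F_{q^m}^k$ the $\Fq$-span of the columns of $G_i$. Since the rows of $G$ are $\F_{q^m}$-linearly independent, every nonzero codeword has the form $c=xG$ for a unique $x\in\F_{q^m}^k\setminus\{0\}$, and $c'=\lambda c$ is equivalent to $x'=\lambda x$; moreover, being cutting is invariant under system equivalence, so it is harmless to work with this fixed $G$.

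The first step --- which I expect to be the main obstacle --- is to translate containment of sum-rank supports into linear algebra. Here I would recall the definition of $\supp_{\mathbf{n}}$ (the tuple whose $i$-th entry is the $\Fq$-row space of the block $c_i$ regarded as a matrix in $\Fq^{m\times n_i}$) and establish the key lemma: for $c=xG$, after identifying $\F_{q^m}^{n_i}$ with $\Fq^{m\times n_i}$ through a fixed $\Fq$-basis of $\F_{q^m}$, the $i$-th component of $\supp_{\mathbf{n}}(c)$ equals $\pi_i(\mU_i\cap x^\perp)^{\perp}$, where $\pi_i\colon\Fq^{n_i}\xrightarrow{\sim}\mU_i$, $v\mapsto G_iv$, is the coordinate isomorphism (bijective because $\C$ is nondegenerate) and $^\perp$ denotes the standard bilinear form on $\Fq^{n_i}$. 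This is a short computation: the right kernel of $c_i$, viewed as an $m\times n_i$ matrix over $\Fq$, is $\{v\in\Fq^{n_i}:G_iv\in x^\perp\}=\pi_i^{-1}(\mU_i\cap x^\perp)$, and the row space of a matrix is the orthogonal complement of its right kernel; as a sanity check this recovers $\rk(c_i)=n_i-\dim_{\Fq}(\mU_i\cap x^\perp)$ and hence the weight formula \eqref{eq:weight}. Consequently, for $c=xG$ and $c'=x'G$, the containment $\supp_{\mathbf{n}}(c')\subseteq\supp_{\mathbf{n}}(c)$ holds if and only if $\mU_i\cap x^\perp\subseteq(x')^\perp$ for every $i\in[t]$, equivalently $\langle\mU_1\cap x^\perp,\dots,\mU_t\cap x^\perp\rangle_{\F_{q^m}}\subseteq(x')^\perp$. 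The delicate point is pinning down the correct notion of sum-rank support and passing cleanly between $\Fq$-matrices and $\F_{q^m}$-vectors; once this is in place the two implications are formal.

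For the implication ``$\mU$ cutting $\implies$ $\C$ minimal'', let $c=xG$ be nonzero, so $x^\perp$ is a linear hyperplane, and suppose $\supp_{\mathbf{n}}(x'G)\subseteq\supp_{\mathbf{n}}(xG)$. Writing $\mH$ for the corresponding projective hyperplane of $\PG(k-1,q^m)$, one has $(L_{\mU_1}\cup\cdots\cup L_{\mU_t})\cap\mH=L_{\mU_1\cap x^\perp}\cup\cdots\cup L_{\mU_t\cap x^\perp}$, whose $\F_{q^m}$-span is $\langle\mU_1\cap x^\perp,\dots,\mU_t\cap x^\perp\rangle_{\F_{q^m}}$; so the cutting hypothesis forces this span to equal $x^\perp$. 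Combined with the previous step this gives $x^\perp\subseteq(x')^\perp$, whence either $x'=0$ and $c'=0=0\cdot c$, or $(x')^\perp$ is a hyperplane containing $x^\perp$ and therefore equal to it, so that $x'=\lambda x$ and $c'=\lambda c$. Thus every nonzero codeword of $\C$ is minimal.

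For the converse I would argue by contraposition. If $\mU$ is not cutting, there is a linear hyperplane $x^\perp$ (with $x\neq0$) such that $W:=\langle\mU_1\cap x^\perp,\dots,\mU_t\cap x^\perp\rangle_{\F_{q^m}}\subsetneq x^\perp$, hence $\dim_{\F_{q^m}}W\leq k-2$. Since $W$ has codimension at least $2$, more than one hyperplane of $\F_{q^m}^k$ contains it, so we may choose one of the form $(x')^\perp$ with $(x')^\perp\neq x^\perp$; then $x'\neq0$ and $x'\notin\langle x\rangle_{\F_{q^m}}$. Because $\mU_i\cap x^\perp\subseteq W\subseteq(x')^\perp$ for all $i$, the first step yields $\supp_{\mathbf{n}}(x'G)\subseteq\supp_{\mathbf{n}}(xG)$, while $x'G\neq0$ and $x'G\neq\lambda\,xG$ for every $\lambda\in\F_{q^m}$. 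Hence $xG$ is a nonzero non-minimal codeword and $\C$ is not minimal, which completes the argument.
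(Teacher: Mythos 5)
Your proof is correct, but it is worth noting that the paper itself gives no proof of this statement: it is imported wholesale as a citation of \cite[Corollary 10.25]{santonastaso2022subspace}. What you have written is a self-contained argument, and it is the natural sum-rank generalization of the known Hamming- and rank-metric proofs. Your key lemma --- that the $i$-th block of $\supp_{\mathbf{n}}(xG)$ equals $\pi_i^{-1}(\mU_i\cap x^{\perp})^{\perp}$, so that $\supp_{\mathbf{n}}(x'G)\subseteq\supp_{\mathbf{n}}(xG)$ if and only if $\mU_i\cap x^{\perp}\subseteq (x')^{\perp}$ for all $i$ --- is exactly the right pivot, and both implications then follow as you describe (the forward one from the cutting condition forcing $\langle \mU_1\cap x^\perp,\dots,\mU_t\cap x^\perp\rangle_{\F_{q^m}}=x^\perp$, the converse by choosing a second hyperplane through a codimension-$\geq 2$ span). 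Your dimension count $\rk(c_i)=n_i-\dim_{\F_q}(\mU_i\cap x^\perp)$ correctly recovers \eqref{eq:weight}, which confirms the lemma. The one point you rightly flag as delicate is genuine: the paper never defines $\supp_{\mathbf{n}}$, and your argument depends on the \emph{row-support} convention (the $i$-th component is an $\F_q$-subspace of $\F_q^{n_i}$, namely the row space of $c_i$ viewed in $\F_q^{m\times n_i}$); under the alternative column-support convention the translation lemma would not read the same way. Since the row-support convention is the one used in the cited source, your proof stands; what it buys over the paper's approach is independence from an external reference, at the cost of having to fix that convention explicitly.
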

	
	As in the other metrics, also in the sum-rank one cutting systems give rise to saturating systems.
	
	\begin{theorem}\label{thm:cutting}
		If $\mU$ is a cutting system in $\F_{q^m}^k$, then $\mU$ is a sum-rank-$(k-1)$-saturating system in $\F_{q^{m(k-1)}}^k$.
	\end{theorem}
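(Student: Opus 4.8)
The plan is to reduce the statement to the known fact (Theorem~\ref{th:correspondence} together with its Hamming-metric antecedents) that a strong blocking set in $\PG(k-1,q^m)$ is a $(k-1)$-saturating set of $\PG(k-1,q^{m(k-1)})$, once one is careful about the extension of the base field. So first I would fix notation: let $\mU=(\mU_1,\dots,\mU_t)$ be cutting in $\F_{q^m}^k$, so that $\mS:=L_{\mU_1}\cup\cdots\cup L_{\mU_t}$ is a strong blocking set of $\PG(k-1,q^m)$. Since $\mS$ is a set of points of $\PG(k-1,q^m)$, and $\F_{q^m}\subseteq\F_{q^{m(k-1)}}$, each point of $\mS$ is also a point of $\PG(k-1,q^{m(k-1)})$; moreover the $\mU_i$, viewed as $\F_q$-subspaces of $\F_{q^{m(k-1)}}^k$, still have the right dimensions and still span $\F_{q^{m(k-1)}}^k$ over $\F_{q^{m(k-1)}}$ (spanning over the smaller field is preserved under extension), so $\mU$ genuinely is an $[\mathbf n,k]_{q^{m(k-1)}/q}$ system. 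By definition of sum-rank $\rho$-saturating, what must be shown is that $\mS$ is a $(k-2)$-saturating set of $\PG(k-1,q^{m(k-1)})$, i.e.\ every point $Q\in\PG(k-1,q^{m(k-1)})$ lies in the $\F_{q^{m(k-1)}}$-span of at most $k-1$ points of $\mS$.

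Next I would carry out the core geometric argument. Fix $Q=\langle v\rangle_{\F_{q^{m(k-1)}}}$ with $v\in\F_{q^{m(k-1)}}^k\setminus\{0\}$. Write $v=\sum_{j=1}^{k-1}\alpha_j w_j$ where $\{\alpha_1,\dots,\alpha_{k-1}\}$ spans (over $\F_{q^m}$) the $\F_{q^m}$-subspace of $\F_{q^{m(k-1)}}$ generated by the coordinates of $v$ — this is possible since $[\F_{q^{m(k-1)}}:\F_{q^m}]=k-1$, so that subspace has $\F_{q^m}$-dimension at most $k-1$ — and each $w_j\in\F_{q^m}^k$. If some $w_j=0$ we may drop it, so assume all $w_j\neq 0$ and set $R_j:=\langle w_j\rangle_{\F_{q^m}}\in\PG(k-1,q^m)$. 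Now the strong blocking set property enters: I would argue that each $R_j$ is $\F_{q^m}$-spanned by at most $k-1$ points of $\mS$. Indeed, a strong blocking set in $\PG(k-1,q^m)$ is in particular a $(k-2)$-saturating set there — this is exactly the content of the implication used in \cite{davydov2011linear} and recalled in the rank-metric case; concretely, any point $R$ lies in some hyperplane $\mH$, $\mS\cap\mH$ spans $\mH$, hence contains $k-1$ independent points of $\mH$, and $R\in\mH=\langle\text{those }k-1\text{ points}\rangle$. Thus $R_j=\langle P_{j,1},\dots,P_{j,k-1}\rangle_{\F_{q^m}}$ for suitable $P_{j,i}\in\mS$, so $w_j\in\langle P_{j,1},\dots,P_{j,k-1}\rangle_{\F_{q^m}}\subseteq\langle P_{j,1},\dots,P_{j,k-1}\rangle_{\F_{q^{m(k-1)}}}$.

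Then I would combine these: $v=\sum_{j=1}^{k-1}\alpha_j w_j$ shows $v\in\langle\, \{P_{j,i}: 1\le j\le k-1,\ 1\le i\le k-1\}\,\rangle_{\F_{q^{m(k-1)}}}$, a span of at most $(k-1)^2$ points of $\mS$ over the big field. That is a span of a set of points, which is a subspace of $\PG(k-1,q^{m(k-1)})$ of projective dimension at most $k-1$; any point of a projective space of dimension $\le k-1$ lies in the span of $\le k$ of any spanning set of points, but we want $\le k-1$. The sharpening to $k-1$ is the delicate point and is the step I expect to be the main obstacle: one must observe that $Q$ lies in the $\F_{q^{m(k-1)}}$-span of the $R_j$'s, which is a subspace of dimension at most $k-1$ over $\F_{q^{m(k-1)}}$ but actually — because the $w_j$ came from an $\F_{q^m}$-structure of dimension exactly $k-1$ sitting inside a $k$-dimensional space — this span, call it $W$, has $\F_{q^{m(k-1)}}$-dimension at most $k-1$, hence is contained in (or equal to) a hyperplane; restricting the strong blocking set to that hyperplane (or rather, choosing a hyperplane $\mH_0$ of $\PG(k-1,q^m)$ containing all the $R_j$, which exists precisely when $\dim_{\F_{q^m}}\langle w_1,\dots,w_{k-1}\rangle\le k-1$), $\mS\cap\mH_0$ spans $\mH_0$, so contains $k-1$ independent points $P_1,\dots,P_{k-1}$ with $\mH_0=\langle P_1,\dots,P_{k-1}\rangle_{\F_{q^m}}$, and then every $w_j\in\langle P_1,\dots,P_{k-1}\rangle_{\F_{q^{m(k-1)}}}$, whence $v\in\langle P_1,\dots,P_{k-1}\rangle_{\F_{q^{m(k-1)}}}$. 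This exhibits $Q$ as $(k-2)$-saturated by $\mS$; since $k-1$ points are genuinely needed in general (a strong blocking set spans the whole space), $\rho=k-1$ exactly, and therefore $\mU$ is sum-rank $(k-1)$-saturating in $\F_{q^{m(k-1)}}^k$, as claimed.
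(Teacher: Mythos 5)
Your proof is correct and follows the same overall route as the paper's: both reduce the statement to the fact that a strong blocking set of $\PG(k-1,q^m)$ is a $(k-2)$-saturating set of $\PG(k-1,q^{m(k-1)})$. The difference lies in how that fact is handled. The paper simply cites \cite{davydov2011linear} for it, after a somewhat redundant detour through minimal codes (it passes from the cutting system to the minimality of the associated sum-rank code and of its associated Hamming-metric code, only to land back at the statement that $L_{\mU_1}\cup\cdots\cup L_{\mU_t}$ is a strong blocking set, which is already the definition of a cutting system). You instead prove the key lemma from scratch: decompose $v\in\F_{q^{m(k-1)}}^k$ as $\sum_{j}\alpha_j w_j$ with $w_j\in\F_{q^m}^k$ using $[\F_{q^{m(k-1)}}:\F_{q^m}]=k-1$, place all the $w_j$ in a common $\F_{q^m}$-hyperplane $\mH_0$, and invoke the cutting property to span $\mH_0$ by $k-1$ points of the set. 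This is precisely the argument behind the cited result, so your write-up is self-contained where the paper's is not. The only soft spot, which the paper shares, concerns the minimality clause in the definition of $\rho$-saturating: your remark that ``a strong blocking set spans the whole space'' does not by itself show that some point genuinely requires $k-1$ points of $\mS$, so strictly speaking both proofs establish $\rho\leq k-1$ rather than equality.
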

	
	\begin{proof}
		The system $\mU=(\mU_1,\ldots,\mU_t)$ is cutting in $\F_{q^m}^k$, so that the associated code $\C$ is minimal (by Theorem \ref{th:correspondence}). Then the associated Hamming-metric code to $\C$ defined in \cite[Definition 9.26.]{santonastaso2022subspace} is minimal, by \cite[Corollary 9.27.]{santonastaso2022subspace}. 
		Hence $L_{\mU_1}\cup \ldots\cup L_{\mU_t}$ is a strong blocking set in ${\rm PG}(k-1,q^m)$. Then $L_{\mU_1}\cup \ldots\cup L_{\mU_t}$ is a $(k-2)$-saturating set in ${\rm PG}(k-1,q^{m(k-1)})$ by \cite{davydov2011linear}. By definition, this means that $\mU$ is a sum-rank $(k-1)$-saturating system in $\F_{q^{m(k-1)}}^k$.
	\end{proof}

	\begin{example}
		In \cite[Section 4]{borello2023geometric}, the authors provide bounds on the parameters and constructions of  minimal sum-rank codes. These last have either one or two nonzero weights. Thanks to Theorem \ref{thm:cutting}, these constructions provide more examples of saturating systems in the sum-rank metric.
		
		The doubly extended linearized Reed-Solomon code (see \cite{neri2023geometry}) with parameters $$[((\underbrace{m,\ldots,m}_{q-1\text{ times}},1,1),2]_{q^m/q}$$
		and their geometric dual with parameters $$[(\underbrace{m,\ldots,m}_{q-1\text{ times}},2m-1,2m-1),2]_{q^m/q}$$ are both minimal sum-rank codes (see \cite[Remark 4.6.]{borello2023geometric}). The first ones have length $(q-1)m+2$, which meets the lower bound for the length of minimal sum-rank codes, for any $m$ and the second ones have length $(q-1)m+4m-2$, which is minimal for $m=2$. Note the parameters of these codes do not meet our lower bound.
	\end{example}

	\noindent \textbf{Acknowledgements.}
	We thank the anonymous reviewers for their careful reading of this manuscript.
	The results of this paper are the result of a collaboration that arose within the IRC-PHC Ulysses
	project “Geometric Constructions of Codes for Secret Sharing Schemes”. The second author is partially supported by the ANR-21-CE39-0009 - BARRACUDA (French \emph{Agence Nationale de la Recherche}).
	
	\bibliographystyle{abbrv}
	\bibliography{references.bib}
	
\end{document}